\newtheorem{theorem}{Theorem}[section]
\newtheorem{proposition}[theorem]{Proposition}
\newtheorem{corollary}[theorem]{Corollary}
\newtheorem{lemma}[theorem]{Lemma}
\newtheorem{remark}[theorem]{Remark}
\newtheorem{definition}[theorem]{Definition}
\newtheorem{question}[theorem]{Question}
\newcommand{\zN}{\mathbb N}
\newcommand{\sub}{\subseteq}
\newcommand{\zT}{\mathbb T}
\newcommand{\f}{\frac}
\newcommand{\AP}{\mathcal {AP}}
\newcommand{\kAP}{\mathcal {AP}_b}
\newcommand{\A}{\mathcal {F}}
\newcommand{\interior}[1]{\accentset{\smash{\raisebox{-0.12ex}{$\scriptstyle\circ$}}}{#1}\rule{0pt}{2.3ex}}
\renewcommand{\l}{\left(}
\renewcommand{\r}{\right)}
    \title{Arithmetic progressions and chaos in linear dynamics}
\begin{document}
\keywords{Hypercylic operators, chaotic operators, Furstenberg families, arithmetic progressions, small periodic sets}
\subjclass[2010]{
47A16, 
    37B20  	
	37A45  
		11B25 
		47B37  	
}

\begin{abstract}
We characterize chaotic linear operators on reflexive Banach spaces in terms of the existence of long arithmetic progressions in the sets of return times. 
To achieve this, we study $\A$-hypercyclicity for a family of subsets of the natural numbers associated to the existence of arbitrarily long arithmetic progressions. We investigate their connection with different concepts in linear dynamics. 
\end{abstract}

\author{Rodrigo Cardeccia, Santiago Muro}
\address{DEPARTAMENTO DE MATEM\'ATICA - PAB I,
	FACULTAD DE CS. EXACTAS Y NATURALES, UNIVERSIDAD DE BUENOS AIRES, (1428) BUENOS AIRES, ARGENTINA AND IMAS-CONICET} \email{rcardeccia@dm.uba.ar} 
\address{FACULTAD DE CIENCIAS EXACTAS, INGENIERIA Y AGRIMENSURA, UNIVERSIDAD NACIONAL DE ROSARIO, ARGENTINA AND CIFASIS-CONICET}
\email{muro@cifasis-conicet.gov.ar}

\thanks{Partially supported by ANPCyT PICT 2015-2224, UBACyT 20020130300052BA, PIP 11220130100329CO and CONICET}

\maketitle
\section { Introduction }
A linear operator $T$ is said to be hypercyclic provided that there is $x\in X$ such that $Orb_T(x):=\{T^n(x):n\in \zN\}$ is dense in $X$ and chaotic if it is hypercyclic and has dense periodic points. The notion of chaos was introduced by Devaney \cite{Dev89} and developed by Godefroy and Shapiro \cite{GodSha91} in the context of linear dynamics. Since then it was one of the most important concepts in the dynamics of linear operators.  Linear dynamics has experienced a lively development in the last decades, see \cite {BayMat09,GroPer11}. For instance we know that every infinite dimensional and separable Banach space supports a hypercyclic operator \cite {ansari1997existence,bernal1999hypercyclic} while there are Banach spaces without chaotic operators \cite {BonMarPer01}, there are hypercyclic operators $T$ such that $T\oplus T$ are no longer hypercyclic \cite {dlRRea09}, etc.  Over the last years attention was given to \textit {frequent hypercyclicity} \cite{BayGri06} and more recently to $\A$\textit{-hypercyclicity} \cite{Bes16,BesMen19,BonGro18,bonilla2020frequently}, for more general families $\A$ of subsets of $\mathbb N$. 

Given a  \textit {hereditary upward} family $\A\sub \mathcal P(\zN)$ (also called Furstenberg family) we say that an operator is $\A$-hypercyclic if there is $x\in X$ for which the sets $N_T(x,U):=\{n\in\zN: T^n(x)\in U\}$ of return times belong to $\A$. Thus, for example, if we consider $\A_{\neq \emptyset}$, the family of non empty sets, $\A_{\neq \emptyset}$-hypercyclicity is simply hypercyclity and if $ \underline{\mathcal{D}}$ denotes the family of sets with positive lower density,  then $ \underline{\mathcal{D}}$-hypercyclicity is frequent hypercyclicity. Over the last years several notions of $\A$-hypercyclicity were introduced such as upper frequent hypercyclicity \cite{shk09}, reiterative hypercyclicity \cite{Bes16} and more recently piecewise-syndetic hypercyclicity \cite{Pui17}.

 It is known that there are frequently hypercyclic operators that are not chaotic \cite {BayGri07} and it was recently shown that there are chaotic operators that are not frequently hypercyclic \cite{Men17}. However, the connection between frequently hypercyclicity and chaos is still not very well understood. 
 The following question was posed by Bonilla and Grosse-Erdmann  \cite{BonGro18}.
 
\begin{question} \label{Pregunta1} Does there exist a hereditery upward family $\mathcal F$ such that $\A$-hypercyclicity is equivalent to chaos?
 \end{question}
A related (weaker) question is the following.
\begin{question} \label{Pregunta2} Is it possible to characterize chaos in terms of the behavior of a single orbit?
 \end{question}
In the present note, we introduce a  notion of $\A$-hypercyclicity related to the existence of long arithmetic progressions 
and study its connection with chaos and other concepts in linear dynamics.  We answer Question \ref{Pregunta1} affirmatively on separable reflexive Banach spaces and Question \ref{Pregunta2} on arbitrary separable Fr\'echet spaces.


The motivation to study the relationship between arithmetic progressions and chaotic operators is simple: if $T$ is a chaotic operator and $U$ is a nonempty open set then the existence of periodic points on $U$ imply that $N_T(x,U)$ must have arbitrarily long  arithmetic progressions for any hypercyclic vector $x$.

The study of sets having arbitrarily long arithmetic progressions (or sets in $\AP$) had a great development over the last century and is a central task in number theory and additive combinatorics. 
On the other hand, there isn't, up to our knowledge, a systematic investigation on sets having arbitrarily long arithmetic progressions with fixed common difference.   Nevertheless, as we shall see, these sets play an important roll in linear dynamics. We will denote by $\kAP$  to this family of subsets, and we will use it to answer Question \ref{Pregunta1}   for weak$^*$-weak$^*$ continuous operators: such an operator  is chaotic if and only if it is $\kAP$-hypercyclic (Theorem \ref{main AP*}). For arbitrary operators the family $\kAP$ can still be used to characterize chaos in terms of a single orbit: $T$  is chaotic if and only if there is $x\in X$ such that for every nonempty open set $U$, the return times set $N(x,U)$ contains  a subsequence $(n_k)_k\in \kAP$ for which the set $\{T^{n_k}(x)\}$ is weakly precompact. 
As a corollary, we obtain a Transitivity Theorem (Theorem \ref{transitive theorem}) for chaotic operators.

The paper is organized as follows. In Section \ref{section preliminares} we fix notation and recall some facts about hereditary upward families and $\A$-hypercyclicity. 
In Section \ref{section salap} we study $\kAP$ hypercyclic operators, operators having dense small periodic sets and their connection to chaos. We prove that these concepts are equivalent for weak$^*$-weak$^*$ continuous operators (Theorems \ref{main AP*} and \ref{Caoticos}) and we show the existence of an $\kAP$-hypercyclic weighted shift on $c_0$ which is not chaotic. We also prove that hypercyclic weighted shifts with dense small periodic sets are chaotic (Theorem \ref{Backward c_0}) and that $\kAP$-hypercyclic operators cannot have isolated points in the spectrum (Corollary \ref{espectro}).

\section{Preliminaries}\label{section preliminares}

A family $\mathcal F\sub \mathcal P (\zN)$ is said a \textit{hereditary upward family} or a \textit {Furstenberg family} if $A\sub B$ and $A\in \mathcal F$, then $B\in \mathcal F$ (see for example \cite{Aki97,BonGro18}). Given a Furstenberg family $\mathcal F$ we will say that $T$ is $\mathcal F$-hypercyclic provided that there is $x\in X$ such that for every open set $U$, $N_T(x,U):=\{n: T^n(x)\in U\}\in \mathcal F.$ Such an $x$ is called an $\mathcal F$-hypercyclic vector. 

The following hereditary upward families and notions of $\A$-hypercyclicity are the most widely studied in the literature:
\begin{enumerate}
\item $A$ is said to have positive lower density (or $A\in  \underline{\mathcal{D}}$) if $\underline {dens} (A):=$ $\liminf_n \# \f{\{k\leq n: k\in A\}}{n}>0$  and an operator is said to be frequently hypercyclic if $T$ is $ \underline{\mathcal{D}}$ hypercyclic. 
\item $A$ is said to have positive upper density (or $A\in \A_{ud}$) if $\limsup_n \# \f{\{k\leq n: k\in A\}}{n}>0$  and an operator is said to be $\mathcal U$-frequently hypercyclic if $T$ is $\A_{ud}$-hypercyclic.
\item $A$ is said to have positive Banach upper density (or $A\in\A_{bd}$) if $\lim_n\limsup_k \f{\# A\cap [k,k+n]}{n}>0$ and an operator is said to be reiterative hypercyclic if $T$ is $\A_{bd}$-hypercyclic.
\end{enumerate}
 


A hereditary upward family is said to be \textit {upper} provided that $\emptyset\notin \A$ and $\A$ can be written as 
$$\bigcup_{\delta\in D} A_\delta,\quad\textrm{with }\; \A_\delta=  \bigcap_{m\in M} \A_{\delta,m},$$
where $M$ is countable and such that the families $\A_{\delta,m} $ and $\A_{\delta}$ satisfy
\begin{itemize}
\item each $\A_{\delta,m}$ is \textit{finitely hereditary upward}, that means that for each $A\in \A_{\delta,m}$, there is a finite set $F$ such that $F\cap A\sub B,$ then $B\in \A_{\delta,m}$;
\item $\A_\delta$ is \textit{uniformly left invariant}, that is, if $A\in\A$ then there is $\delta$ such that for every $n$, $A-n\in \A_\delta$. 
\end{itemize}
 The families $\A_{\neq \emptyset}$, $\A_{ud},\A_{bd}$ are upper while $ \underline{\mathcal{D}}$ is not upper (see \cite{BonGro18}).


\begin{theorem}[Bonilla-Grosse Erdmann \cite{BonGro18}]\label{equivalencias upper}
Let $\A$ be a an upper hereditary upward family and $T$ be a linear operator on a separable Fr\'echet space. Then the following are equivalent:
\begin{enumerate}
    \item For any open set $V$ there is $\delta$ such that for any open set $U$ there is $x\in U$ with $N_T(x,U)\in \A_\delta.$
    \item For any open set $V$ there is $\delta$ such that for every $U$ and $m$ there is $x\in U$  with $N_T(x,U)\in \A_{\delta,m}$.
    \item The set of $\A$-hypercyclic points is residual.
    \item $T$ is $\A$-hypercyclic.
\end{enumerate}

\end{theorem}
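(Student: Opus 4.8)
The plan is to run the cycle of implications $(4)\Rightarrow(1)\Rightarrow(2)\Rightarrow(3)\Rightarrow(4)$, so that $(2)\Rightarrow(1)$ in particular comes for free. The implications $(1)\Rightarrow(2)$ and $(3)\Rightarrow(4)$ are essentially formal: for the first, $\A_\delta=\bigcap_{m\in M}\A_{\delta,m}$, so the very same $\delta$ and $x$ produced by $(1)$ also witness $(2)$; for the second, a separable Fr\'echet space is a Baire space, hence a residual set is nonempty and yields an $\A$-hypercyclic vector. The substance is in $(4)\Rightarrow(1)$, where the uniform left invariance of the $\A_\delta$ is used, and in $(2)\Rightarrow(3)$, a Baire category argument powered by the finitely hereditary upward condition.

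For $(4)\Rightarrow(1)$ I would take an $\A$-hypercyclic vector $x_0$; since $\emptyset\notin\A$ and $\A$ is hereditary upward, every $N_T(x_0,U)$ is nonempty, so $x_0$ is hypercyclic. Fix a nonempty open set $V$. Then $N_T(x_0,V)\in\A$, so uniform left invariance provides a $\delta$, depending only on $V$, with $N_T(x_0,V)-n\in\A_\delta$ for all $n$. Given a nonempty open $U$, choose $n_0$ with $T^{n_0}x_0\in U$ and set $x:=T^{n_0}x_0\in U$; translating as in the definition, $N_T(x,V)=N_T(x_0,V)-n_0\in\A_\delta$, which is exactly $(1)$.

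For $(2)\Rightarrow(3)$, fix a countable base $(V_j)_j$ of nonempty open subsets of $X$, and for each $j$ let $\delta_j$ be the index furnished by $(2)$ for $V_j$. For $j$ and $m\in M$ put
\[
R_{j,m}=\{x\in X:\ N_T(x,V_j)\in\A_{\delta_j,m}\}.
\]
The key observation is that, since $\A_{\delta_j,m}$ is finitely hereditary upward, a set $A$ belongs to $\A_{\delta_j,m}$ if and only if it contains one of the finite sets in
\[
\mathcal G_{j,m}=\{G\subseteq\zN\ \text{finite}:\ \text{every set containing }G\text{ lies in }\A_{\delta_j,m}\};
\]
one direction is the definition of $\mathcal G_{j,m}$, and for the other one takes $G=F\cap A$, where $F$ is the finite set supplied by the finitely hereditary upward property for $A$. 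Consequently
\[
R_{j,m}=\bigcup_{G\in\mathcal G_{j,m}}\ \bigcap_{n\in G}T^{-n}(V_j),
\]
which is open because $T$ is continuous and each $G$ is finite; and $R_{j,m}$ meets every nonempty open set, by hypothesis $(2)$, so it is dense. Now $M$ is countable and $(V_j)_j$ is countable, so $R:=\bigcap_j\bigcap_{m\in M}R_{j,m}$ is a dense $G_\delta$ in the Baire space $X$. Any $x\in R$ satisfies $N_T(x,V_j)\in\bigcap_{m}\A_{\delta_j,m}=\A_{\delta_j}\subseteq\A$ for every $j$; since $(V_j)_j$ is a base and $\A$ is hereditary upward, $N_T(x,U)\in\A$ for every nonempty open $U$, so $x$ is $\A$-hypercyclic, and $R$ witnesses $(3)$.

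I expect the only genuine obstacle to be the openness of $R_{j,m}$: one must realize that ``finitely hereditary upward'' collapses membership in $\A_{\delta,m}$ to a countable union of conditions of the form ``$T^nx\in V_j$ for all $n$ in a prescribed finite set'', each of which cuts out an open set. Once this is in place, the rest is Baire category bookkeeping, using the countability of $M$ and of the base, and the hereditary upward-ness of $\A$ only at the very end to pass from a base of open sets to all open sets.
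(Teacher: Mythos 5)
Your proof is correct, and the paper itself gives no argument for this statement --- it is quoted from \cite{BonGro18} --- so the only meaningful comparison is with that source, where the proof runs exactly as yours does: uniform left invariance drives $(4)\Rightarrow(1)$, and the reduction of membership in $\A_{\delta,m}$ to containment of a finite witnessing set makes each $R_{j,m}$ open, so that $(2)\Rightarrow(3)$ is a Baire category argument. One remark: you have (rightly) read conditions $(1)$ and $(2)$ as requiring $N_T(x,V)\in\A_\delta$ (resp.\ $\A_{\delta,m}$) rather than $N_T(x,U)$ as literally printed, which would make $V$ vacuous; this is a typo in the statement relative to the cited reference, and your silent correction is the intended reading.
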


\section{ $\kAP$-hypercyclic operators  and chaotic operators}\label{section salap}
In this section we study $\kAP$-hypercyclic operators and their relationship with chaotic operators. Our main result is the following theorem, which shows that the weak$^*$-weak$^*$ continuous chaotic operators are exactly the $\kAP$-hypercyclic operators. We will also show in Theorem \ref{Without periodic sets} that the assumption on weak$^*$-weak$^*$ continuity cannot be dropped, by exhibiting an $\kAP$-hypercyclic operator on $c_0$ that is not chaotic.
\begin{theorem}\label{main AP*}
Let $X$ be a separable Banach space which is a dual space and let $T$ be a weak$^*$-weak$^*$ continuous operator on $X$. Then $T$ is chaotic if and only if there exists $x\in X$ such that for each nonempty open set $U$,  $N_T(x,U)$ contains arbitrarily long arithmetic progressions of common difference $k$, for some $k\in\zN$.
\end{theorem}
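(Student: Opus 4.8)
The plan is to prove the two implications separately, with the forward direction being essentially a soft observation and the reverse direction carrying all the weight.

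For the easy direction, suppose $T$ is chaotic. Then $T$ has a dense set of periodic points and a hypercyclic vector $x$. Fix any nonempty open $U$; it contains a periodic point $p$ of some period $k$. By density of $Orb_T(x)$ we may find $n_0$ with $T^{n_0}(x)$ close to $p$, and since the orbit of $p$ is $k$-periodic, continuity of $T^{jk}$ near $p$ for each fixed $j$ forces $T^{n_0+jk}(x)\in U$ for all $j=0,\dots,\ell-1$, for a suitable choice of $n_0$ depending on $\ell$. (More carefully: shrink a neighborhood $V\ni p$ with $T^{jk}(V)\sub U$ for $0\le j<\ell$; this uses only finitely many continuous maps, so such $V$ exists; then $N_T(x,V)\neq\emptyset$ gives the length-$\ell$ progression with common difference $k$.) Thus $N_T(x,U)$ contains arbitrarily long arithmetic progressions of a common difference $k=k(U)$. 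This is exactly the motivation sketched in the introduction.

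For the reverse direction, assume such an $x$ exists; I must produce dense periodic points. The key idea is that, for each nonempty open $U$, the progressions $\{T^{n+jk}(x):0\le j<\ell\}\sub U$ with fixed common difference $k$ behave like orbit segments of $S:=T^k$. So fix $U$ and its difference $k$, set $S=T^k$, and consider the points $y_\ell:=T^{n_\ell}(x)$ at the start of a length-$\ell$ progression, so that $y_\ell, S y_\ell,\dots,S^{\ell-1}y_\ell\in U$. Now exploit reflexivity (here is where $X$ being a separable dual and $T$ being weak$^*$-weak$^*$ continuous enters): the bounded-by-assumption — actually one must be slightly careful, so instead I would look at Ces\`aro-type averages $z_\ell:=\frac1\ell\sum_{j=0}^{\ell-1} S^j y_\ell$, which lie in the weak$^*$-closed convex hull of $U$ (so in particular stay controlled), pass to a weak$^*$-convergent subnet $z_{\ell}\to z$, and check that $(S-I)z_\ell = \frac1\ell(S^\ell y_\ell - y_\ell)$; bounding $\|S^\ell y_\ell - y_\ell\|$ is the crux. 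The point is that $S^\ell y_\ell\in U$ (it is the last term of the progression, or one can arrange the progression to have length $\ell+1$), and $y_\ell\in U$, so $S^\ell y_\ell - y_\ell$ stays in $U-U$, a bounded set; hence $(S-I)z_\ell\to 0$ in norm, and by weak$^*$-weak$^*$ continuity of $S-I$ we get $(S-I)z = 0$, i.e. $z$ is a fixed point of $S=T^k$, hence a $k$-periodic point of $T$. Finally $z\in \overline{\mathrm{conv}}^{w^*}(U)$ — but this convex hull can meet many other sets, so I must instead run the argument with $U$ replaced by a small ball $B(u,\eps)$ and note $z$ lies in $\overline{B(u,\eps)}^{w^*}\sub \overline{B(u,\eps)}$ (norm closure, using that balls are weak$^*$-closed in a dual space), giving a periodic point within $\eps$ of $u$. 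Ranging over a dense set of $u$ and all $\eps>0$ yields dense periodic points, and $\kAP$-hypercyclicity of $x$ in particular gives that $x$ is hypercyclic, so $T$ is chaotic.

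The main obstacle is the passage from "progression of return times with common difference $k$" to an honest fixed point of $T^k$: one needs the averages $z_\ell$ to have a weak$^*$ cluster point lying in the norm-closure of the small ball, and one needs $\|T^{k\ell}y_\ell - y_\ell\|$ to be bounded uniformly in $\ell$ — which is why working with a ball $B(u,\eps)$ rather than an arbitrary open $U$ is essential, since then both endpoints of the progression lie in $B(u,\eps)$ and their difference has norm at most $2\eps$, so $\|(S-I)z_\ell\|\le 2\eps/\ell\to 0$. A secondary subtlety is ensuring the weak$^*$ continuity of $T$ transfers to $T^k$ and to $T^k-I$ (immediate) and that the separable predual makes the weak$^*$ topology on bounded sets metrizable, so that subsequences rather than subnets suffice; this lets us extract $z$ cleanly. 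Once these pieces are in place, the theorem follows, and in fact the same scheme will prove Theorem~\ref{transitive theorem} by running the density argument simultaneously over a countable basis.
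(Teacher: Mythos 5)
Your proof is correct, but the hard direction runs along a genuinely different line from the paper's. The paper factors the reverse implication through the notion of \emph{dense small periodic sets}: from the arithmetic progressions one first extracts a weak$^*$-limit point $y$ of the progression starting points $T^{a_n}(x)$ and shows that $\overline{Orb_{T^k}(y)}^{\omega^*}$ is a weak$^*$-compact $k$-periodic set inside $\overline U^{\omega^*}$ (Lemma \ref{AP* implica dense small}), and then produces an actual periodic point in the closed convex hull of that set via the Schauder--Tychonoff fixed point theorem (Lemma \ref{lema w-compact periodic set tienen ptos periodicos}). You instead build the fixed point of $S=T^k$ in one step, as a weak$^*$ cluster point of the Ces\`aro averages $z_\ell=\frac1\ell\sum_{j=0}^{\ell-1}S^jy_\ell$ of the orbit segments themselves: the telescoping identity $(S-I)z_\ell=\frac1\ell(S^\ell y_\ell-y_\ell)$ together with the fact that both endpoints lie in $B(u,\eps)$ gives $\|(S-I)z_\ell\|\le 2\eps/\ell\to0$, and weak$^*$-weak$^*$ continuity of $S-I$ plus Hausdorffness of the weak$^*$ topology forces $(S-I)z=0$ for any weak$^*$ cluster point $z$, which lies in the (weak$^*$-closed) norm-closed ball $\overline{B(u,\eps)}$. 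This is essentially the Markov--Kakutani argument for a single affine map, unfolded by hand; it is more elementary and self-contained (no appeal to Schauder--Tychonoff, no intermediate notion of periodic set), and all the places where you invoke convexity of the ball, weak$^*$-closedness of norm-closed balls in a dual, and weak$^*$-continuity of $T^k-I$ are sound. What the paper's longer route buys is the extra structure recorded in Theorem \ref{Caoticos}, Proposition \ref{prop AP* implica ptos periodicos en el bidual} and Corollary \ref{dense small w-compact sets} (the equivalence with dense small periodic sets, the bidual version for non-reflexive spaces, and the single-orbit characterization), which your direct construction does not produce by itself, though it could be adapted. Your easy direction coincides with the paper's (and with the motivation given in the introduction).
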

Note that the above equivalence holds for arbitrary operators on reflexive spaces.
Theorem \ref{main AP*} is a direct consequence of Theorem \ref{Caoticos} below. 
Let us first define the Furstenberg family $\kAP$.
Recall that the arithmetic progression of length $m+1$ ($m\in\mathbb N$), common difference $k\in\mathbb N$ and initial term $a\in\mathbb N$ is the subset of $\mathbb N$ of the form $\{a,a+k,a+2k,\dots,a+mk\}$.

\begin{definition}\label{def salap}\rm
    We will denote by $\kAP$ to the family of subsets of the natural numbers that contain {arbitrarily long arithmetic progressions of bounded common difference} (i.e. there are arbitrarily long arithmetic progressions of common difference bounded by  $k$, for some fixed $k\in\mathbb N$).
\end{definition}
The family $\kAP$ is an upper Furstenberg family: it is the union of the families $(\kAP)_{n}$ of subsets having arbitrarily long arithmetic progressions with fixed step $n$,  and $(\kAP)_{n}$ is the intersection of the families $(\kAP)_{n,m}$ of subsets having arithmetic progressions fixed step $n$ with length $m$.
The next proposition is thus a consequence of Theorem \ref{equivalencias upper}.


\begin{proposition}\label{equivalencias salap}
Let $T$ be an operator on a Fr\'echet space. Then the following assertions are equivalent.
\begin{enumerate}[label=(\arabic*)]
\item $T$ is hypercyclic and  every hypercyclic vector is $\kAP$-hypercyclic.
\item There is an $\kAP$-hypercyclic vector.
\item $T$ is hypercyclic and for every open set $U$ there is $k$ such that for every $m$ , $\bigcap_{j=1}^m T^{-jk}(U)\neq \emptyset.$
\item \label{salap transitivo} For every open sets $U$ and $V$ there is $k$ such that for every $m$ there are $k_m$ and $x\in U$ with $T^{k_m+jk}(x)\in V$ for every $0\leq j\leq m$. 

\item  The set of $\kAP$-hypercyclic vectors is residual. 
\end{enumerate}
\end{proposition}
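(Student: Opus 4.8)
The plan is to prove the cycle $(1)\Rightarrow(2)\Rightarrow(3)\Rightarrow(1)$, to read off $(2)\Leftrightarrow(5)$ from Theorem~\ref{equivalencias upper}, and to close with the elementary equivalence $(3)\Leftrightarrow(4)$. Since, as observed just after Definition~\ref{def salap}, $\kAP=\bigcup_n(\kAP)_n$ with $(\kAP)_n=\bigcap_m(\kAP)_{n,m}$ is an upper Furstenberg family, Theorem~\ref{equivalencias upper} applies, and its conditions $(4)$ and $(3)$ are verbatim our $(2)$ and $(5)$; so $(2)\Leftrightarrow(5)$ comes for free. The implication $(1)\Rightarrow(2)$ is trivial, since a hypercyclic operator has a hypercyclic vector, which by hypothesis is $\kAP$-hypercyclic.

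For $(2)\Rightarrow(3)$ I would first note that a $\kAP$-hypercyclic vector $x$ is in particular hypercyclic, because each $N_T(x,U)$ lies in $\kAP$ and is therefore nonempty. Fixing a nonempty open $U$, the set $N_T(x,U)$ contains arbitrarily long arithmetic progressions of common difference bounded by some $k$; since only finitely many differences $\le k$ can occur and any arithmetic progression of step $n$ contains sub-progressions of step $n$ of every smaller length, a pigeonhole argument produces a single $n\le k$ for which $N_T(x,U)$ contains arithmetic progressions of step $n$ of all lengths. Reading each progression $\{a,a+n,\dots,a+mn\}\subseteq N_T(x,U)$ back along the orbit shows $T^a(x)\in\bigcap_{j=1}^m T^{-jn}(U)$, so this intersection is nonempty for every $m$, which is exactly $(3)$ with that $n$.

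The crux — and the only real content of the statement — is $(3)\Rightarrow(1)$: in $(3)$ the common difference $k$ depends on $U$ alone and not on any particular vector, so \emph{any} hypercyclic vector can be forced to pick up the same progressions. Concretely, assuming $(3)$, $T$ is hypercyclic; given an arbitrary hypercyclic vector $y$ and a nonempty open $U$, take $k=k(U)$ from $(3)$, and for each $m$ observe that $V_m:=\bigcap_{j=1}^m T^{-jk}(U)$ is a nonempty open set, so hypercyclicity of $y$ furnishes $N$ with $T^N(y)\in V_m$, i.e.\ $\{N+k,N+2k,\dots,N+mk\}\subseteq N_T(y,U)$. As $m$ is arbitrary while $k$ stays fixed, $N_T(y,U)\in(\kAP)_k\subseteq\kAP$, and since $U$ is arbitrary, $y$ is $\kAP$-hypercyclic; this is $(1)$.

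Finally, $(3)\Leftrightarrow(4)$. For $(3)\Rightarrow(4)$, given open $U,V$ I would apply $(3)$ to $V$ to obtain a single $k$ with $\bigcap_{j=1}^{m+1}T^{-jk}(V)\neq\emptyset$ for all $m$; applying $T^k$ to a point of the latter gives $W_m:=\bigcap_{j=0}^{m}T^{-jk}(V)\neq\emptyset$, and then the Birkhoff transitivity theorem (legitimate, $T$ being hypercyclic on a separable Fr\'echet space) yields $p$ and $x\in U$ with $T^p(x)\in W_m$, i.e.\ $T^{p+jk}(x)\in V$ for $0\le j\le m$; set $k_m:=p$. Conversely, the case $m=0$ of $(4)$ is topological transitivity, hence $T$ is hypercyclic by Birkhoff, and feeding $(4)$ the open sets $U,U$ and length $m+1$ yields $x\in U$ with $T^{k_{m+1}}(x)\in\bigcap_{j=1}^{m}T^{-jk}(U)$, giving $(3)$. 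Thus the only ingredients beyond Theorem~\ref{equivalencias upper} are the observation isolated in $(3)\Rightarrow(1)$ and the classical Birkhoff theorem; everything else is unwinding the definitions.
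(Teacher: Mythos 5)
Your proof is correct, but it is considerably more explicit than the paper's, which consists of the single remark that the proposition ``is thus a consequence of Theorem~\ref{equivalencias upper}'' once $\kAP$ has been recognized as an upper Furstenberg family. In effect the paper reads conditions (2)--(5) off the four conditions of that theorem specialized to $\A=\kAP$ (with $\A_\delta=(\kAP)_\delta$ and $\A_{\delta,m}=(\kAP)_{\delta,m}$), whereas you invoke the theorem only for $(2)\Leftrightarrow(5)$ and prove everything else by hand: the pigeonhole step identifying $\kAP$ with $\bigcup_n(\kAP)_n$, the direct translation between arithmetic progressions in $N_T(x,U)$ and nonemptiness of $\bigcap_{j=1}^m T^{-jk}(U)$, and Birkhoff transitivity for $(3)\Leftrightarrow(4)$. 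Your route buys something real: condition (1), that \emph{every} hypercyclic vector is $\kAP$-hypercyclic, is not a verbatim instance of any clause of Theorem~\ref{equivalencias upper} (residuality of the $\A$-hypercyclic vectors is strictly weaker, and the analogous statement fails for, say, upper frequent hypercyclicity), and you correctly isolate the reason it holds here --- the common difference $k$ in (3) depends only on $U$ and not on the vector --- which the paper leaves entirely implicit. The only points worth tightening are cosmetic: Birkhoff's theorem needs separability of the Fr\'echet space (implicit throughout the paper but not stated in the proposition), and in $(4)\Rightarrow(3)$ taking length $m$ rather than $m+1$ already suffices.
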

In \cite{Men17} it was shown that chaotic operators are reiteratively hypercyclic. The proof given there  essentially proves the following.
\begin{proposition}\label {chaotic entonces salap}
Let $T$ be a chaotic operator. Then $T$ is $\kAP$-hypercyclic.
\end{proposition}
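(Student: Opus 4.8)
The plan is to reduce the statement to the characterization of $\kAP$-hypercyclicity provided by Proposition \ref{equivalencias salap}. Since a chaotic operator is by definition hypercyclic, it is enough to verify, say, condition (3) of that proposition: for every nonempty open set $U$ there is $k\in\zN$ such that $\bigcap_{j=1}^{m}T^{-jk}(U)\neq\emptyset$ for every $m$.

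To check this, fix a nonempty open set $U$ and use the density of the periodic points to choose a periodic point $p\in U$; let $k\in\zN$ be a period of $p$, so that $T^{k}p=p$ and hence $T^{jk}p=p\in U$ for every $j\geq1$. Thus $p\in\bigcap_{j=1}^{m}T^{-jk}(U)$ for every $m$, which is exactly condition (3) with the common difference $k$ attached to $U$. Proposition \ref{equivalencias salap} then gives that $T$ is $\kAP$-hypercyclic (indeed that the set of $\kAP$-hypercyclic vectors is residual). Alternatively one can verify the transitive condition (4): given nonempty open sets $U,V$ and a periodic point $q\in V$ of period $k$, the set $W_m:=\bigcap_{j=0}^{m}T^{-jk}(V)$ is open and nonempty since $q\in W_m$, and topological transitivity of $T$ produces $x\in U$ and $k_m$ with $T^{k_m}(x)\in W_m$, i.e. $T^{k_m+jk}(x)\in V$ for $0\leq j\leq m$.

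There is essentially no obstacle in this argument; the only point worth stressing is that the common difference $k$ is permitted to depend on the open set $U$ (it is the period of a periodic point sitting inside $U$), which is precisely what the definition of $\kAP$ allows, while the lengths of the progressions are automatically unbounded because a periodic point returns to $U$ along the entire infinite progression $p,T^{k}p,T^{2k}p,\dots$. This is what is meant by saying that the proof of \cite{Men17} already yields this (formally stronger) conclusion.
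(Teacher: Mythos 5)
Your argument is correct and is essentially the argument the paper has in mind: the paper only cites \cite{Men17}, but the underlying idea there is exactly yours, namely that a periodic point $p\in U$ of period $k$ yields the infinite arithmetic progression of return times needed to verify condition (3) (or (4)) of Proposition \ref{equivalencias salap}. Your remark that $k$ may depend on $U$ is the right point to stress, and the appeal to Proposition \ref{equivalencias salap} to upgrade this to genuine $\kAP$-hypercyclicity is legitimate since that proposition is established beforehand.
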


On the other hand, there are subsets of the natural numbers (for instance the square free numbers) that have positive lower density but do not belong to $\kAP$. So we cannot conclude that frequently hypercyclic operators are $\kAP$-hypercyclic. There are also chaotic operators that are not  upper frequently hypercyclic \cite{Men17}. Therefore there are $\kAP$-hypercyclic operators that are not upper frequent hypercyclic. Moreover, since there are frequently hypercyclic operators on Hilbert spaces that are not chaotic \cite[Section 6.5]{BayMat09}, by Theorem \ref{main AP*} frequent hypercyclicity does not imply $\kAP$-hypercyclicity.

Since $\kAP$ sets have positive upper Banach density and since reiterative hypercyclic operators are weakly mixing \cite{Bes16} we have that $\kAP$-hypercyclic operators are weakly mixing. 

\begin{proposition}
    Let $T$ be a $\kAP$-hypercyclic operator. Then $T$ is reiterative hypercyclic. In particular $T$ is weakly mixing.
\end{proposition}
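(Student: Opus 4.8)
The plan is to reduce everything to the set-theoretic inclusion $\kAP \subseteq \A_{bd}$. Once this is established, any $\kAP$-hypercyclic vector $x$ automatically satisfies $N_T(x,U) \in \A_{bd}$ for every nonempty open set $U$, so the very same $x$ is an $\A_{bd}$-hypercyclic vector; hence $T$ is reiteratively hypercyclic. The weak mixing then follows immediately from the fact that reiteratively hypercyclic operators are weakly mixing \cite{Bes16}.

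To prove the inclusion $\kAP \subseteq \A_{bd}$, I would use a direct counting estimate. Fix $A \in \kAP$ and let $k \in \zN$ be such that $A$ contains arbitrarily long arithmetic progressions of common difference at most $k$. Given $n \geq k$, set $\ell = \lfloor n/k\rfloor$, so that $\ell k \leq n$ and $\ell+1 \geq n/k$. By hypothesis $A$ contains an arithmetic progression $\{a, a+d, \dots, a+\ell d\}$ of length $\ell+1$ with $1\leq d\leq k$, and this progression is contained in $[a, a+\ell d] \subseteq [a, a+n]$. Therefore $\#\big(A\cap[a,a+n]\big) \geq \ell+1 \geq n/k$, which gives
$$\limsup_{j}\ \frac{\#\,A\cap[j,j+n]}{n}\ \geq\ \frac{1}{k}\qquad\text{for every } n\geq k.$$
Taking the limit in $n$ yields $\lim_n \limsup_j \frac{\#\,A\cap[j,j+n]}{n} \geq 1/k > 0$, i.e. $A \in \A_{bd}$, as wanted.

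There is essentially no serious obstacle here: the whole content is the counting estimate above, and the only points requiring (mild) care are the off-by-one coming from the floor function and matching the estimate to the precise definition of positive upper Banach density. With the inclusion in hand, the statement follows by combining it with the cited result on reiterative hypercyclicity implying weak mixing \cite{Bes16}.
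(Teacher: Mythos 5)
Your proof is correct and follows essentially the same route as the paper, which simply asserts that $\kAP$ sets have positive upper Banach density and then invokes \cite{Bes16}; you merely supply the elementary counting argument that the paper leaves implicit. One small point of care: the $\limsup$ over the left endpoint $j$ requires witnessing windows located arbitrarily far out, which you obtain not from a single progression of length $\ell+1$ but by taking progressions of length tending to infinity (their length-$n$ terminal segments start at arbitrarily large positions), a harmless adjustment to your write-up.
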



In order to prove Theorem \ref{main AP*}, we need the  concept of dense small periodic sets, which is a natural generalization of density of periodic points. The notion was introduced by Huan and Ye in \cite{HuaYe05} for non linear dynamics on compact spaces. 
We will say that a subset $Y$ is a periodic set for $T$ if $T^k(Y)\subset Y$ for some $k>0$.
\begin{definition}
    A  mapping $T$ has dense small periodic sets provided that for every open set $U$ there is a closed periodic set $Y\sub U$.
\end{definition}

\begin{proposition}\label{dense small implica AP*}
An operator $T$ has dense small periodic sets if and only if for every nonempty open set $U$ there is $k$ such that $\bigcap_{j=1}^\infty T^{-jk}(U)\neq \emptyset.$ 

In particular if $T$ is hypercyclic and has dense small periodic sets then it is $\kAP$-hypercyclic.
\end{proposition}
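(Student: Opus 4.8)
\emph{Proof proposal.} The plan is to prove the two implications of the equivalence separately and then deduce the ``in particular'' clause from Proposition \ref{equivalencias salap}.

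For the direction ``dense small periodic sets $\Rightarrow$ intersection condition'', I would fix a nonempty open set $U$, use the definition to obtain a closed periodic set $Y\sub U$ with $T^k(Y)\sub Y$ for some $k>0$, and observe that $T^k(Y)\sub Y$ forces $T^{jk}(Y)\sub Y$ for all $j\geq 1$ by an immediate induction (if $T^{jk}(Y)\sub Y$ then $T^{(j+1)k}(Y)=T^k(T^{jk}(Y))\sub T^k(Y)\sub Y$). Hence any $y\in Y$ satisfies $T^{jk}(y)\in Y\sub U$ for every $j$, i.e. $y\in\bigcap_{j=1}^\infty T^{-jk}(U)$, so this intersection is nonempty. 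This step is routine.

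For the converse, the naive idea is: take $x$ with $T^{jk}(x)\in U$ for all $j\geq 1$ and let $Y=\overline{\{T^{jk}(x):j\geq 1\}}$. This set is closed, nonempty, and satisfies $T^k(Y)\sub Y$, since $T^k$ is continuous and maps the forward $T^k$-orbit of $x$ into itself, so $T^k(Y)\sub\overline{T^k(\{T^{jk}(x):j\geq1\})}\sub Y$. The only gap — and the main point to be careful about — is that $\overline{\{T^{jk}(x):j\geq1\}}$ need not be contained in $U$. I would fix this by shrinking $U$ first: since the ambient space is metrizable, choose a nonempty open $V$ with $\overline V\sub U$, apply the hypothesis to $V$ to get $k$ and a point $x\in\bigcap_{j=1}^\infty T^{-jk}(V)$, and set $Y=\overline{\{T^{jk}(x):j\geq1\}}\sub\overline V\sub U$. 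Then $Y$ is a closed periodic set inside $U$, so $T$ has dense small periodic sets.

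Finally, for the last sentence: if $T$ is hypercyclic and has dense small periodic sets, then by the equivalence just established, for each nonempty open $U$ there is $k$ with $\bigcap_{j=1}^\infty T^{-jk}(U)\neq\emptyset$, hence $\bigcap_{j=1}^m T^{-jk}(U)\neq\emptyset$ for every $m$; this is precisely condition (3) of Proposition \ref{equivalencias salap}, and therefore $T$ is $\kAP$-hypercyclic.
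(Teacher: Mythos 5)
Your proposal is correct and follows essentially the same route as the paper's proof: shrinking $U$ to $V$ with $\overline V\sub U$ and taking the closure of the $T^k$-orbit of a point in $\bigcap_{j\ge1}T^{-jk}(V)$ for one direction, observing that every point of a closed $T^k$-invariant subset of $U$ lies in $\bigcap_{j\ge1}T^{-jk}(U)$ for the other, and invoking condition (3) of Proposition \ref{equivalencias salap} for the final claim. Your explicit induction and the care about taking the orbit from $j\ge1$ are just slightly more detailed versions of what the paper leaves implicit.
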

\begin{proof}
Let $U$ be an open set and consider $V\sub U$ such that $V\sub \overline V\sub U$. Let $x\in \bigcap_{j=1}^\infty T^{-jk}(V)$. Then the set $Y=\overline{Orb_{T^k}(x)}$ is a closed subset of $U$ which satisfies  $T^k(Y)\subset Y$. Reciprocally given an open set $U$ and $Y\sub U$ a closed  subset which is $T^k$-invariant, every $x\in Y$  belongs to $\bigcap_{j=1}^\infty T^{-jk}(U).$

The last assertion follows from Proposition \ref{equivalencias salap}.
\end{proof}
In  subsection \ref{weighted shifts} we will present an example of an $\kAP$-hypercyclic operator that does not have dense small periodic sets. 

The next lemma, which  is purely linear as it exploits  the linearity of both the operator and the space, is an important ingredient for the proof of the main theorem.
\begin{lemma}\label{lema w-compact periodic set tienen ptos periodicos}
Let $Y$ be a $k$-periodic set for an operator $T$ on a Fr\'echet space $X$ such that either
\begin{enumerate}[label=\roman*)]
    \item $Y$ is weakly compact or
    \item $X$ is a dual space, $Y$ is weak$^*$-compact and $T$ is weak$^*$-weak$^*$  continuous. 
\end{enumerate}
Then there is a $k$-periodic vector in $\overline{co(Y)}^\tau$, where $\tau$ denotes 
weak or weak star topology, respectively.
\end{lemma}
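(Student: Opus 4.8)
The plan is to reduce the statement to a fixed point theorem for the linear map $S := T^k$ acting on the $\tau$-closed convex hull $C := \overline{\mathrm{co}(Y)}^{\tau}$. Since $T^k(Y)\subseteq Y$, a $k$-periodic vector lying in $C$ is exactly a point $z\in C$ with $Sz=z$, so it suffices to produce such a fixed point.

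First I would check that $C$ is a nonempty $\tau$-compact convex subset of $(X,\tau)$. In case ii) this is easy: a weak$^*$-compact set is norm bounded by the uniform boundedness principle, so $\mathrm{co}(Y)\subseteq rB_X$ for some $r>0$; by Banach--Alaoglu $rB_X$ is weak$^*$-compact, and $C$ is a weak$^*$-closed subset of it. In case i) this is the substantive point: it is precisely Krein's theorem, which asserts that the closed convex hull of a weakly compact subset of a quasi-complete locally convex space --- in particular of the Fr\'echet space $X$ --- is again weakly compact. Note that in both cases $Y$ is also bounded in $X$ (a $\tau$-compact set is $\tau$-bounded, hence bounded).

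Next, $S=T^k$ is linear and $\tau$-continuous: in case i) every continuous linear operator on a Fr\'echet space is weak--weak continuous, while in case ii) $S$ is weak$^*$--weak$^*$ continuous by hypothesis. From $S(Y)\subseteq Y$ and linearity we get $S(\mathrm{co}(Y))\subseteq \mathrm{co}(Y)$, and hence $S(C)\subseteq \overline{S(\mathrm{co}(Y))}^{\tau}\subseteq C$ by $\tau$-continuity. Now a continuous affine self-map of a nonempty compact convex subset of a Hausdorff locally convex space has a fixed point (Markov--Kakutani applied to the single map $S$); concretely, fixing any $y\in Y$ and setting $z_N = \tfrac{1}{N}\sum_{j=0}^{N-1} S^j y \in \mathrm{co}(Y)\subseteq C$, one passes to a $\tau$-convergent subnet $z_{N_\alpha}\to z\in C$ and uses $S z_{N_\alpha} - z_{N_\alpha} = \tfrac{1}{N_\alpha}\bigl(S^{N_\alpha}y - y\bigr)\to 0$ --- valid because $Y$ is bounded --- together with $\tau$-continuity of $S$ to conclude $Sz=z$. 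This $z\in C=\overline{\mathrm{co}(Y)}^{\tau}$ is the desired $k$-periodic vector.

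The only real obstacle is the $\tau$-compactness of $C$: in the dual-space case it follows painlessly from boundedness and Banach--Alaoglu, but in the weakly-compact case it genuinely relies on Krein's theorem and on the completeness of $X$. Everything else is the standard averaging (Markov--Kakutani) argument, and it is exactly here that the linearity of both $T$ and the space $X$ enters, as the lemma's preamble advertises.
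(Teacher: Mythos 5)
Your proof is correct and has the same skeleton as the paper's: establish that $C=\overline{co(Y)}^{\tau}$ is a nonempty $\tau$-compact convex $T^k$-invariant set, then produce a fixed point of $T^k$ in $C$. The compactness step coincides with the paper's (Krein--\v Smulian in the weakly compact case, and boundedness plus Alaoglu in the dual case, for which the paper instead cites Schaefer). Where you genuinely diverge is the fixed-point ingredient: the paper invokes the Schauder--Tychonoff theorem, which needs only continuity of the self-map on the compact convex set, while you apply Markov--Kakutani for the single affine map $T^k$, carried out explicitly via the Ces\`aro averages $z_N=\frac{1}{N}\sum_{j=0}^{N-1}T^{jk}y$ together with the boundedness of $Y$. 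Your route is more elementary and self-contained --- it avoids a deep topological fixed-point theorem by exploiting the linearity of $T^k$, which is precisely the feature the lemma's preamble emphasizes --- at the cost of a slightly longer argument; both routes consume exactly the same compactness input, so neither gains generality over the other.
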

\begin{proof}
The proof is an elementary application of the Schauder-Tychonoff fixed point Theorem for locally convex spaces \cite{Tyc35}.  


Let $Y$ be a $k$-periodic set. Then $\overline {co (Y)}^{\tau}$ is $T^k$-invariant. Moreover, $\overline {co (Y)}^{\tau}$ is $\tau$-compact (by either the Krein-\v Smulian Theorem \cite{KreiSmul40} or \cite[Chapter II, 4.3]{SchaeferTVS}).
Therefore, the  Schauder-Tychonoff Theorem assures the existence of a fixed point of $T^k$ in $\overline {co (Y)}^{\tau}$. This fixed point is a $k$-periodic vector for $T$.
\end{proof}

\begin{proposition}\label{prop AP* implica ptos periodicos en el bidual}
Let $T$ be a hypercyclic operator on a separable Banach space. Consider the following statements.
\begin{itemize}
    \item[i)] $T$ is $\kAP$-hypercyclic.
    \item[ii)] For each nonempty open set $U\subset X$ there is a closed periodic set of $T^{**}$ contained in $\interior{\overline{U}^{w*}}\subset X^{**}$.
    \item[iii)] For each nonempty open set $U\subset X$ there is $k$ such that $\displaystyle\bigcap_{j=1}^\infty (T^{**})^{-jk}\l\interior{\overline U^{\omega^*}}\r\neq \emptyset.$
    \item[iv)] The norm closure of the periodic points of $T^{**}$ contains $X$.
    \end{itemize}
Then $i)\Rightarrow ii)\Leftrightarrow iii)\Leftrightarrow iv).$
\end{proposition}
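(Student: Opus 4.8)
The plan is to take $iii)$ as the central condition and prove $i)\Rightarrow iii)$, $iii)\Leftrightarrow ii)$ and $iii)\Leftrightarrow iv)$. Throughout, $W_U:=\interior{\overline U^{w*}}$ denotes the \emph{norm}-interior in $X^{**}$ of the weak$^*$-closure of $U$, so that $iii)$ is exactly the bidual counterpart of the intersection property of Proposition~\ref{dense small implica AP*}. I will use three elementary facts: (a) $W_U$ is norm-open in $X^{**}$, contains $U$, and is monotone in $U$; (b) $T^{**}=(T^{*})^{*}$ and all its powers are weak$^*$-weak$^*$ continuous (they are adjoints of bounded operators on $X^{*}$), and norm-bounded subsets of $X^{**}$ are weak$^*$-relatively compact; (c) by Goldstine's theorem the weak$^*$-closure of an open ball $B_X(x_0,r)$ is the closed ball $\overline B_{X^{**}}(x_0,r)$, hence $W_{B_X(x_0,r)}=B_{X^{**}}(x_0,r)$. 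For a fixed nonempty open $U$ I choose $x_0\in U$ and $\rho>0$ with $B_X(x_0,3\rho)\sub U$, and set $V:=B_X(x_0,\rho)$; then $\overline V^{w*}=\overline B_{X^{**}}(x_0,\rho)\sub B_{X^{**}}(x_0,2\rho)\sub W_U$.

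For $i)\Rightarrow iii)$: by Proposition~\ref{equivalencias salap}, $\kAP$-hypercyclicity provides $k\in\zN$ with $\bigcap_{j=1}^{m}T^{-jk}(V)\neq\emptyset$ for all $m$; picking a witness $y_m$ and passing to $z_m:=T^{k}y_m$ gives, for each $m\geq1$, a vector $z_m\in V$ with $T^{jk}(z_m)\in V$ for $0\leq j\leq m-1$. As the $z_m$ lie in the weak$^*$-compact ball $\overline B_{X^{**}}(x_0,\rho)$, their weak$^*$ limit $\xi$ along a free ultrafilter $\mathcal U$ on $\zN$ exists and lies in that ball. For each fixed $j$ one has $(T^{**})^{jk}(z_m)=T^{jk}(z_m)\in V$ for $\mathcal U$-almost all $m$, and since $(T^{**})^{jk}$ is weak$^*$-weak$^*$ continuous and $\overline B_{X^{**}}(x_0,\rho)$ is weak$^*$-closed, $(T^{**})^{jk}(\xi)\in\overline B_{X^{**}}(x_0,\rho)\sub W_U$. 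Hence $\xi\in\bigcap_{j=1}^{\infty}(T^{**})^{-jk}(W_U)$, which is $iii)$.

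The equivalence $ii)\Leftrightarrow iii)$ is the bidual version of Proposition~\ref{dense small implica AP*}. For $ii)\Rightarrow iii)$, any point of a closed $k$-periodic set $Y\sub W_U$ of $T^{**}$ has its entire $(T^{**})^{k}$-orbit in $Y\sub W_U$. For $iii)\Rightarrow ii)$, apply $iii)$ to $V$ to get $k$ and $\xi$ with $(T^{**})^{jk}(\xi)\in W_V$ for all $j\geq1$, and let $Y$ be the norm-closure of $\{(T^{**})^{jk}(\xi):j\geq1\}$; since $T^{**}$ is norm-continuous, $Y$ is a norm-closed $(T^{**})^{k}$-invariant set, and $Y\sub\overline V^{w*}\sub W_U$ because $\overline V^{w*}$ is weak$^*$-closed, hence norm-closed. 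Finally $iv)\Rightarrow ii)$ is immediate: by $iv)$ there is a periodic vector $\xi$ of $T^{**}$ with $\|\xi-x_0\|<\rho$, and $\{\xi\}$ is a closed periodic set inside $B_{X^{**}}(x_0,\rho)\sub W_U$.

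It remains to prove $iii)\Rightarrow iv)$. Given $x\in X$ and $\eps>0$, apply $iii)$ to $B_X(x,\eps)$ to get $k$ and $\xi$ with $(T^{**})^{jk}(\xi)\in B_{X^{**}}(x,\eps)$ for all $j\geq1$. The weak$^*$-closure $Y$ of $\{(T^{**})^{jk}(\xi):j\geq1\}$ is a weak$^*$-compact $k$-periodic set for $T^{**}$ contained in $\overline B_{X^{**}}(x,\eps)$, so Lemma~\ref{lema w-compact periodic set tienen ptos periodicos}(ii), applied to the weak$^*$-weak$^*$ continuous operator $T^{**}$ on the dual space $X^{**}=(X^{*})^{*}$, gives a $k$-periodic vector of $T^{**}$ in $\overline{co(Y)}^{w*}\sub\overline B_{X^{**}}(x,\eps)$. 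Since $x$ and $\eps$ are arbitrary, $X$ is contained in the norm-closure of the periodic points of $T^{**}$, which is $iv)$. The implication with real content is $i)\Rightarrow iii)$: one must assemble a genuine one-sided $(T^{**})^{k}$-orbit in the bidual from finite approximate orbit segments in $X$, the delicate point being to make the weak$^*$ limit land inside the open set $W_U$ rather than merely in its weak$^*$-closure — which is why $U$ is first shrunk to $V$ — and the non-metrizability of bounded subsets of $X^{**}$ in the weak$^*$ topology forces the use of ultrafilters or subnets instead of subsequences.
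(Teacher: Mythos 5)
Your proof is correct and follows essentially the same route as the paper's: extract a weak$^*$ cluster point of finite approximate orbit segments inside a shrunken ball $\overline V^{\omega^*}$ (using Goldstine/Alaoglu and the weak$^*$-weak$^*$ continuity of $T^{**}$) to get a point whose full $(T^{**})^k$-orbit stays in $\interior{\overline U^{\omega^*}}$, and then apply Lemma \ref{lema w-compact periodic set tienen ptos periodicos} to the weak$^*$-compact orbit closure to obtain $iv)$. The only differences are organizational — you route everything through $iii)$ and invoke the intersection condition of Proposition \ref{equivalencias salap} with ultrafilter limits, where the paper works with the return sets of a single $\kAP$-hypercyclic vector and cluster points of a sequence — and these do not change the substance of the argument.
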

\begin{proof}
$i)\Rightarrow ii)$ 
Let $U\subset X$ be an open set and $V=B_r(x_0)$ so that $\overline V\sub U$, $\overline V^{\omega^*}\subset \interior{\overline U^{\omega^*}}$ is a weak$^*$-compact set in $X^{**}$. Let  $x\in V$ such that $N(x,V)\in \kAP$. Thus, there are $k\in \zN$ and a sequence $(a_n)_n$, such that $T^{a_n +ik}(x)\in V$ for every $i\leq n$. There is a weak$^*$-limit point $y\in \overline V^{\omega^*}$ of the sequence $(T^{a_n}(x))_n$

Let $Y=\overline{Orb_{(T^{**})^k}(y)}^{\omega^*},$ the weak$^*$-closure of the orbit of $y$ under $(T^{**})^k$. This set is clearly $(T^{**})^k$-invariant, so we only need to show that $Y\sub \interior{\overline U^{\omega^*}}$. It suffices to show that for every $m$, $(T^{**})^{km}(y)\in \overline V^{\omega^*}.$  Fix $m\in \zN$ and notice that $T^{**}$ since is weak$^*$-weak$^*$-continuous then 
 $(T^{**})^{km}(y)$ is a weak$^*$-limit point of $((T^{**})^{a_{n}+km}(x))_n=(T^{a_{n}+km}(x))_n$. Since for any $n\ge m$, we have that $T^{a_{n}+km}(x)\in  V$, we conclude that $T^{mk}(y)\in \overline V^{\omega^*}.$

 $ii)\Leftrightarrow iii)$ Follows as the proof of Proposition \ref{dense small implica AP*}.
 
 $ii)\Leftrightarrow iv)$ Since any ball of the bidual $X^{**}$ centered at a point of $X$ contains a weak$^{*}$-compact periodic set, statement $iv)$ holds by Lemma \ref{lema w-compact periodic set tienen ptos periodicos}. The converse is immediate.
 
 \end{proof}

Note that, in particular, the above proposition proves Theorem \ref{main AP*} for reflexive spaces.
Let us see that we can push this argument a little further.

 

In \cite[Proposition 3.2]{HuaYe05} Huang and Ye studied the relationship between compact dynamical systems having dense small periodic sets and sets $N_f(x,U)$ having arbitrarily long arithmetic progressions with fixed step (see also \cite{Li11}).
The following lemma is a generalization of their result to dynamical systems on infinite dimensional  spaces.

 \begin{lemma}\label{AP* implica dense small}
Let $X$ be a separable Banach space which is a dual space and let $T:X\to X$ be a  weak$^*$-weak$^*$ continuous (not necessarily linear) mapping. Then $T$ is  $\kAP$-hypercyclic if and only if $T$ is hypercyclic and has dense small periodic sets.
\end{lemma}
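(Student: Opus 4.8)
**Proof plan for Lemma \ref{AP* implica dense small}.**

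The plan is to prove the two implications separately. The ``if'' direction is already essentially available: if $T$ is hypercyclic and has dense small periodic sets, then by Proposition \ref{dense small implica AP*} it is $\kAP$-hypercyclic, and this requires no continuity or linearity hypothesis at all. So the content of the lemma is the ``only if'' direction, and here the goal is to upgrade Proposition \ref{prop AP* implica ptos periodicos en el bidual} from producing periodic sets of $T^{**}$ in the bidual to producing genuine closed periodic sets of $T$ inside every nonempty open $U\subset X$ itself. The key point that makes this possible is that $T$ is assumed weak$^*$-weak$^*$ continuous \emph{on $X$} (and $X$ is itself a dual space), so there is no need to pass to the bidual: all the weak$^*$ limit arguments can be carried out in $X$ with its own weak$^*$ topology.

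First I would fix a nonempty open $U\subset X$ and, exactly as in the proof of $i)\Rightarrow ii)$ of Proposition \ref{prop AP* implica ptos periodicos en el bidual}, choose a ball $V=B_r(x_0)$ with $\overline V\subset U$. Since $X$ is a separable dual space, $\overline V^{w*}$ is a weak$^*$-compact (and, by separability, weak$^*$-metrizable) subset of $X$. Pick an $\kAP$-hypercyclic vector $x$ with $N_T(x,V)\in\kAP$: there are $k\in\zN$ and $(a_n)_n$ with $T^{a_n+ik}(x)\in V$ for all $i\le n$. Let $y$ be a weak$^*$-cluster point of $(T^{a_n}(x))_n$ in $\overline V^{w*}$. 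Using weak$^*$-weak$^*$ continuity of $T$ (hence of each $T^{km}$), the point $(T^{k})^m(y)=T^{km}(y)$ is a weak$^*$-cluster point of $(T^{a_n+km}(x))_n$, and since $T^{a_n+km}(x)\in V$ for all $n\ge m$, we get $T^{km}(y)\in\overline V^{w*}$ for every $m\ge 0$. Therefore $Y_0:=\overline{Orb_{T^k}(y)}^{w*}\subset\overline V^{w*}\subset U$ is a weak$^*$-closed, weak$^*$-compact, $T^k$-invariant set contained in $U$ — i.e.\ a closed small periodic set inside $U$. Running this over all $U$ in a countable base gives dense small periodic sets, and since an $\kAP$-hypercyclic operator is in particular hypercyclic, the ``only if'' direction follows.

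The one subtlety I would be careful about is whether ``closed'' in the definition of dense small periodic sets means norm-closed or weak$^*$-closed; since $\overline V^{w*}$ is bounded and weak$^*$-closed it need not be norm-closed, but a bounded weak$^*$-compact set in a dual space obtained as the weak$^*$-closure of an orbit is what is wanted here, and in the linear/operator setting one can always intersect with, or replace $Y_0$ by, a norm-closed $T^k$-invariant set containing it if needed — in any case the weak$^*$-closure of $Orb_{T^k}(y)$ lies inside the open set $U$, which is the operative requirement. The main obstacle, such as it is, is purely bookkeeping: making sure the cluster-point/continuity argument is stated for the non-linear $T$ (no convexity, no Lemma \ref{lema w-compact periodic set tienen ptos periodicos} needed for this direction, unlike in Proposition \ref{prop AP* implica ptos periodicos en el bidual} where one had to fish a fixed point out of $\overline{co(Y)}$), and checking that weak$^*$-metrizability of bounded sets (from separability of the predual) legitimizes extracting cluster points along the sequence $(a_n)$.
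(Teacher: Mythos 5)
Your proof is correct and takes essentially the same route as the paper: the paper handles the ``if'' direction by citing Proposition \ref{dense small implica AP*} and proves the converse by running the argument of $i)\Rightarrow ii)$ of Proposition \ref{prop AP* implica ptos periodicos en el bidual} inside $X$ with its own weak$^*$ topology, exactly as you describe. Your closing worry about ``closed'' is moot: the weak$^*$ topology is coarser than the norm topology, so the weak$^*$-closed set $\overline{Orb_{T^k}(y)}^{w*}$ is automatically norm-closed.
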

\begin{proof}
One implication is Proposition \ref{dense small implica AP*}. The proof of the converse is similar to the proof of $i)\Rightarrow ii)$ in Proposition \ref{prop AP* implica ptos periodicos en el bidual}.
 \end{proof}

We can now prove our main theorem, which can be restated as follows.
 \begin{theorem}\label{Caoticos}
Let $X$ be a separable  Banach space which is a dual space and let $T:X\to X$ be a weak$^*$-weak$^*$ continuous linear operator. The following assertions are equivalent:
\begin{enumerate}
    \item [i)] $T$ is  $\kAP$-hypercyclic;
    \item [ii)]$T$ is hypercyclic and has dense small periodic sets and
    \item [iii)]$T$ is chaotic.
\end{enumerate}
\end{theorem}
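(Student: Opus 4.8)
The plan is to establish the cycle of implications $i)\Rightarrow ii)\Rightarrow iii)\Rightarrow i)$, leaning on the results already proved in this section. The implication $i)\Leftrightarrow ii)$ is not new: it is precisely Lemma~\ref{AP* implica dense small} applied to the linear operator $T$, so the only genuinely new content is the equivalence of these two conditions with chaos. The implication $iii)\Rightarrow i)$ is also already available: it is Proposition~\ref{chaotic entonces salap}, which says that every chaotic operator is $\kAP$-hypercyclic. Thus the single remaining step is $ii)\Rightarrow iii)$ (equivalently, via Lemma~\ref{AP* implica dense small}, $i)\Rightarrow iii)$): we must upgrade \emph{dense small periodic sets} for a hypercyclic weak$^*$-weak$^*$ continuous linear operator to \emph{dense periodic points}.

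For that step I would argue as follows. Let $U\subset X$ be a nonempty open set; we must produce a periodic point of $T$ inside $U$. Choose an open ball $V=B_r(x_0)$ with $\overline V\subset U$. Since $T$ has dense small periodic sets, by Proposition~\ref{dense small implica AP*} there is $k\in\zN$ with $\bigcap_{j=1}^\infty T^{-jk}(V)\neq\emptyset$; pick $x$ in this intersection, so that $Y:=\overline{Orb_{T^k}(x)}^{w^*}$ is a weak$^*$-closed, $T^k$-invariant subset of $\overline V^{w^*}$, which is a weak$^*$-compact subset of $X$ because $\overline V$ is norm-closed, bounded and convex, hence weak$^*$-closed in the dual space $X$ (Krein--\v Smulian / Banach--Alaoglu). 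Now $Y$ is a $k$-periodic set that is weak$^*$-compact, and $T$ is weak$^*$-weak$^*$ continuous, so hypothesis ii) of Lemma~\ref{lema w-compact periodic set tienen ptos periodicos} is met. That lemma then yields a $k$-periodic vector for $T$ in $\overline{co(Y)}^{w^*}\subset\overline V^{w^*}\subset U$ (the last inclusion since $\overline V$ is convex and norm-closed, hence weak$^*$-closed, so it contains its own weak$^*$-closed convex hull). Hence $U$ contains a periodic point; as $U$ was arbitrary, the periodic points of $T$ are dense, and together with hypercyclicity this gives that $T$ is chaotic.

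The main obstacle — and the reason the weak$^*$-weak$^*$ continuity hypothesis is essential — is getting the fixed point produced by the Schauder--Tychonoff argument to actually lie \emph{in $X$} (here it is automatic since $X$ is the whole space, but this is exactly where the $c_0$ counterexample of Theorem~\ref{Without periodic sets} will fail: there the analogue of $Y$ only lives in the bidual). The key points to check carefully are therefore: that $\overline V$ is weak$^*$-compact in $X$ (using that $X$ is a \emph{dual} Banach space and $\overline V$ is bounded, convex and norm-closed), that $T^k(\overline V^{w^*})$ stays inside $\overline V^{w^*}$ by the continuity of $T$ — this is the content of the orbit being trapped in $\overline V^{w^*}$, exactly as in the proof of $i)\Rightarrow ii)$ of Proposition~\ref{prop AP* implica ptos periodicos en el bidual} but with the bidual replaced by $X$ itself — and that the convex closure used by Lemma~\ref{lema w-compact periodic set tienen ptos periodicos} does not escape $U$. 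None of these is hard, but they are the places where the dual-space and continuity hypotheses are genuinely used. With $ii)\Rightarrow iii)$ in hand the chain closes and the theorem follows; Theorem~\ref{main AP*} is then the special case of the equivalence $i)\Leftrightarrow iii)$, stated at the level of a single orbit.
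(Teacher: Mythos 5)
Your proposal is correct and follows essentially the same route as the paper: the only substantive step, $ii)\Rightarrow iii)$, is carried out exactly as in the paper's proof by trapping a weak$^*$-compact $k$-periodic set inside a convex neighborhood and applying the Schauder--Tychonoff fixed point argument of Lemma~\ref{lema w-compact periodic set tienen ptos periodicos}, with the remaining implications delegated to Lemma~\ref{AP* implica dense small} and to the (immediate) passage from chaos. The only cosmetic difference is that you re-derive the periodic set from $\bigcap_{j\ge1} T^{-jk}(V)$ via Proposition~\ref{dense small implica AP*} rather than taking it directly from the definition of dense small periodic sets.
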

 \begin{proof}
 i)$\Longleftrightarrow $ ii) is Lemma \ref{AP* implica dense small}. iii)$\Longrightarrow $ ii) is immediate.
 

For ii)$\Longrightarrow$ iii)
let $U$ be an open set. We must show that $T$ has a periodic point in $U$. Consider  $V\sub U$ such that $V$ is open, convex, weak$^*$-precompact and such that $\overline V\sub U$. 
 Let $Y\sub V$ be a $k$-periodic set. Then by Lemma \ref{lema w-compact periodic set tienen ptos periodicos}, $T$ has a $k$-periodic point in
 $\overline {co (Y)}^{\omega^*}\subset U$. 
 \end{proof}
In a similar way we have.
\begin{corollary}\label{dense small w-compact sets}
Let $X$ be a Fr\'echet space and $T$ a linear operator that has  dense small weakly  compact periodic sets. Then $T$ has dense  periodic points.  
\end{corollary}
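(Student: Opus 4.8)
The plan is to reproduce the argument used for the implication ii)$\Rightarrow$iii) of Theorem \ref{Caoticos}, replacing the weak$^*$ topology by the weak topology and invoking part i) of Lemma \ref{lema w-compact periodic set tienen ptos periodicos} instead of part ii). Fix a nonempty open set $U\subset X$; we must produce a periodic point of $T$ inside $U$. Since $X$ is a Fr\'echet space, hence locally convex, we may choose a point $y_0\in U$ and an open \emph{convex} set $V$ with $y_0\in V\subset\overline V\subset U$. By hypothesis $T$ has dense small weakly compact periodic sets, so there is a nonempty weakly compact set $Y\subset V$ and an integer $k>0$ with $T^k(Y)\subset Y$.

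Set $C:=\overline{co(Y)}$, the closed convex hull of $Y$. Three facts about $C$ are relevant. First, $C\subset U$: indeed $V$ is open and convex, so $co(Y)\subset V$ and therefore $C\subset\overline V\subset U$. Second, $C$ is weakly compact, since the closed convex hull of a weakly compact subset of a (quasi-)complete locally convex space is weakly compact by Krein's theorem (already used in the proof of Lemma \ref{lema w-compact periodic set tienen ptos periodicos}). Third, $C$ is a $k$-periodic set for $T$, because $T^k$ is linear and continuous, whence $T^k(co(Y))\subset co(T^k(Y))\subset co(Y)$ and, taking closures, $T^k(C)\subset C$.

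Now apply part i) of Lemma \ref{lema w-compact periodic set tienen ptos periodicos} to the weakly compact $k$-periodic set $Y$ (equivalently, to $C$): there is a $k$-periodic vector in $\overline{co(Y)}^{\,w}$, the weak closure of $co(Y)$. But $co(Y)$ is convex, so by Mazur's theorem its weak closure equals its closure in the original topology of $X$, i.e. $\overline{co(Y)}^{\,w}=C\subset U$. Hence $T$ has a $k$-periodic point in $U$, and since $U$ was an arbitrary nonempty open set, the periodic points of $T$ are dense in $X$. The only delicate point is ensuring that passing to the convex hull does not take us outside $U$; this is precisely why we first shrink $U$ to the convex open set $V$, after which everything reduces to Krein's theorem, Mazur's theorem, and the fixed-point lemma already in hand.
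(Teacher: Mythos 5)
Your proposal is correct and follows essentially the same route the paper intends: the paper derives this corollary ``in a similar way'' from the ii)$\Rightarrow$iii) argument of Theorem \ref{Caoticos}, i.e.\ shrink $U$ to a convex open $V$ with $\overline V\subset U$, take a weakly compact $k$-periodic set $Y\subset V$, and apply part i) of Lemma \ref{lema w-compact periodic set tienen ptos periodicos} to obtain a $k$-periodic point in $\overline{co(Y)}^{\,w}\subset U$. Your additional appeal to Mazur's theorem to identify the weak closure of $co(Y)$ with its closure in the original topology is a correct and welcome detail, not a deviation.
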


If we apply Proposition \ref{equivalencias salap} we obtain a transitivity theorem for chaotic operators.
\begin{corollary}[A transitivity Theorem for chaotic operators.]\label{transitive theorem}
Let $X$ be a separable  Banach space which is a dual space and let $T:X\to X$ be weak$^*$-weak$^*$ continuous. Then the following are equivalent:
\begin{enumerate}
    \item $T$ is chaotic;
    \item For every nonempty open sets $U,V$ there is $k$ such that for every $m$ there are $x\in U$ and $k_m$ with $T^{k_m+jk}(x)\in V$ for every $0\leq j\leq m$ and
    \item $T$ is hypercyclic and for every open set $U$ there is $k$ such that  $\cap_{j=1}^m T^{-jk}(U)\neq \emptyset $ for every $m$.
\end{enumerate}
\end{corollary}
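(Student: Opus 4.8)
The plan is to derive this transitivity theorem as a straightforward corollary of the results already established, principally Theorem \ref{Caoticos} and Proposition \ref{equivalencias salap}. The idea is that each of the three conditions is a restatement — via those two results — of $\kAP$-hypercyclicity of $T$.

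First I would observe that under the hypotheses (separable Banach space which is a dual space, $T$ weak$^*$-weak$^*$ continuous), Theorem \ref{Caoticos} gives that $T$ is chaotic if and only if $T$ is $\kAP$-hypercyclic. This immediately identifies condition (1) with $\kAP$-hypercyclicity. Next, I would invoke Proposition \ref{equivalencias salap}, which holds for operators on arbitrary Fréchet spaces and in particular here: its item \ref{salap transitivo} is \emph{verbatim} condition (2) of the corollary, and its item (3) is \emph{verbatim} condition (3) of the corollary. Since all items of Proposition \ref{equivalencias salap} are equivalent to the existence of an $\kAP$-hypercyclic vector (item (2) of that proposition), we conclude that conditions (2) and (3) of the corollary are each equivalent to $T$ being $\kAP$-hypercyclic.

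Chaining these equivalences: condition (1) $\Leftrightarrow$ $T$ is $\kAP$-hypercyclic (Theorem \ref{Caoticos}) $\Leftrightarrow$ condition (2) (Proposition \ref{equivalencias salap}, items (2) and \ref{salap transitivo}) $\Leftrightarrow$ condition (3) (Proposition \ref{equivalencias salap}, items (2) and (3)). This closes the cycle and proves the corollary. There is essentially no obstacle here beyond bookkeeping: the only point requiring a line of care is noting that Proposition \ref{equivalencias salap} applies to the present setting (it is stated for Fréchet spaces, and a separable dual Banach space is in particular a Fréchet space), and that the weak$^*$-weak$^*$ continuity hypothesis is used only through Theorem \ref{Caoticos}, to pass between chaos and $\kAP$-hypercyclicity — conditions (2) and (3) are intrinsically about the norm topology and need no such assumption. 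So the main (trivial) ``obstacle'' is simply citing the right item numbers; the mathematical content is entirely contained in the earlier results.
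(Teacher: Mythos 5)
Your proposal is correct and is essentially identical to the paper's own (one-line) proof: the corollary is obtained by combining Theorem \ref{Caoticos} (chaos $\Leftrightarrow$ $\kAP$-hypercyclicity under the stated hypotheses) with Proposition \ref{equivalencias salap}, whose items (3) and \ref{salap transitivo} coincide verbatim with conditions (3) and (2) of the corollary. Nothing further is needed.
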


 
If the operator is not weak$^*$-weak$^*$ continuous we still can characterize chaos in therms of the behavior of a single orbit.
\begin{proposition}[A characterization in terms of a single orbit]
Let $X$ be  separable  Fr\'echet space.
The following are equivalent:

\begin{enumerate}
    \item There exists a hypercyclic vector $x$ such that for every open set $U$ there is $(a_n)_n\sub N_T(x,U)\cap \kAP$,  such that $(T^{a_n}x)_n$ is is contained in a weakly compact set of $X$.
    \item $T$ is chaotic.
\end{enumerate}
\end{proposition}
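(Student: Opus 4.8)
The plan is to prove the two implications separately: $(2)\Rightarrow(1)$ is a single-orbit sharpening of Proposition~\ref{chaotic entonces salap}, while $(1)\Rightarrow(2)$ is the single-orbit, weak-topology analogue of the implication ii)$\Rightarrow$iii) in Theorem~\ref{Caoticos}, with plain weak compactness replacing weak$^*$ compactness. The only real work is, in each direction, to manufacture a set that is \emph{genuinely} weakly compact rather than merely bounded.

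For $(2)\Rightarrow(1)$ I would fix any hypercyclic vector $x$ (it exists since $T$ is chaotic) and, given a nonempty open set $U$, choose a periodic point $p\in U$ of some period $k$ and an $\varepsilon>0$ with $B(p,\varepsilon)\subset U$. For each $l\in\zN$ the set $W_l:=\bigcap_{j=0}^{l}T^{-jk}\!\left(B(p,\min\{\varepsilon,1/l\})\right)$ is open and contains $p$ (because $T^{jk}p=p$), hence is a nonempty open set; since for a hypercyclic vector the return times to any nonempty open set form an infinite set, I can pick $m_l\in N_T(x,W_l)$ so that the blocks $\{m_l,m_l+k,\dots,m_l+lk\}$ are pairwise disjoint and increasing in $l$. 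Then $T^{m_l+jk}x\in B(p,1/l)\subset U$ for $0\le j\le l$, so the set $A:=\bigcup_l\{m_l,m_l+k,\dots,m_l+lk\}$ is contained in $N_T(x,U)$ and lies in $(\kAP)_k\subset\kAP$. Enumerating $A$ in increasing order as $(a_n)_n$, the shrinking radii $1/l$ in block $l$ force $T^{a_n}x\to p$ in norm, so $\{T^{a_n}x:n\}\cup\{p\}$ is norm-compact, in particular weakly compact; this $(a_n)_n$ is the required sequence.

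For $(1)\Rightarrow(2)$, $T$ is hypercyclic by hypothesis, so it suffices to produce a periodic point in an arbitrary nonempty open set $U$. I would pick $x_0\in U$ and $r>0$ with $B(x_0,2r)\subset U$, set $V:=B(x_0,r)$, and note that, $V$ being convex, its weak closure equals $\overline V\subset U$. Applying $(1)$ to the open set $V$ produces $(a_n)_n\subset N_T(x,V)\cap\kAP$ with $\{T^{a_n}x:n\}$ contained in a weakly compact set $K$. Since $\{a_n:n\}\in(\kAP)_k$ for some $k$, for every $m$ there is an arithmetic progression $c_m,c_m+k,\dots,c_m+mk$ inside $\{a_n:n\}$, hence $T^{c_m+jk}x\in V\cap K$ for $0\le j\le m$. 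Let $y$ be a weak cluster point of the sequence $(T^{c_m}x)_m$ (it exists because the sequence lies in the weakly compact set $K$). For each fixed $j\ge 0$, weak continuity of $T^{jk}$ makes $T^{jk}y$ a weak cluster point of $(T^{c_m+jk}x)_m$, and since all terms with $m\ge j$ lie in the weakly closed set $\overline V\cap K$, we get $T^{jk}y\in\overline V\cap K$. Therefore $Orb_{T^k}(y)\subset\overline V\cap K$, so $Y:=\overline{Orb_{T^k}(y)}^{\,w}$ is a weakly compact $T^k$-invariant set with $Y\subset\overline V\subset U$. Lemma~\ref{lema w-compact periodic set tienen ptos periodicos}(i) then yields a $k$-periodic point of $T$ in $\overline{co(Y)}^{\,w}\subset\overline V\subset U$, which finishes the proof.

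The step I expect to be the main obstacle is securing weak compactness on each side. In $(2)\Rightarrow(1)$ the bounded ball $B(p,\varepsilon)$ is useless by itself, so the diagonal choice of the $m_l$ (forcing the selected orbit values to converge to $p$) is exactly what replaces it by a compact set. In $(1)\Rightarrow(2)$ one must both shrink $U$ to a convex $V$, so that weak and norm closures agree and the weakly compact $k$-periodic set stays inside $U$, and crucially observe that the weak cluster points $T^{jk}y$ remain inside the prescribed weakly compact $K$; without this last point $Y$ would not be weakly compact and Lemma~\ref{lema w-compact periodic set tienen ptos periodicos} could not be invoked.
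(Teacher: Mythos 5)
Your proof is correct and follows essentially the same route as the paper: for $(1)\Rightarrow(2)$ you reduce to producing a dense family of small weakly compact periodic sets (convex $V$, weak cluster point $y$ of the progression starting points, weak continuity of $T^{jk}$ to keep $Orb_{T^k}(y)$ inside $\overline V\cap K$, then the Schauder--Tychonoff lemma), which is exactly the paper's argument via Corollary \ref{dense small w-compact sets}. Your explicit construction for $(2)\Rightarrow(1)$, forcing norm convergence to a periodic point so that the selected orbit segment is relatively compact, is a correct write-up of what the paper dismisses as immediate.
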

\begin{proof}
We only prove (1) $\Longrightarrow $(2), being (2) $\Longrightarrow $(1) inmediate.

(1) $\Longrightarrow $(2). By Corollary \ref{dense small w-compact sets}, it suffices to show that $T$ has dense small weakly compact periodic sets. So let $U$ be an open set and consider  $V\sub \overline V\sub U$ a  convex open set. By assumption there is $k>0$ and a sequence $(k_n)_n$ such that for each $n,$  $T^{k_n+ik}x\in V$ for every $i\leq n$ and such that $K:=\{T^{k_n+ik}x: i\leq n\}$ is weakly precompact. Let $y$ be a weak accumulation point of $\{T^{k_n}x:  n\in\zN\}\subset K$. Then $y\in \overline K ^\omega$. Since $V$ is convex it follows that $y\in U$. 
Proceeding as in the proof of Lemma \ref{AP* implica dense small} (but using that $T$ is weak-weak continuous) we prove that $Y=\overline {Orb_{T^k}(y)}^\omega$ is a periodic set contained in $U$.  
Moreover, $Y$ is weakly compact because $T^{mk}(y)\in \overline K ^\omega $ for every $m$.
\end{proof}

\subsection{ Weighted shifts}\label{weighted shifts}
In this subsection we show that every hypercyclic weighted shift with dense small periodic sets is chaotic. We also show the existence of a weighted shift operator on $c_0$ that is $\kAP$-hypercyclic but does not have dense small periodic and hence it is not chaotic.


It is well known \cite[Theorem 8]{Gro00} that a backward shift defined on a Fr\'echet space with  unconditional basis $\{e_n\}$ is chaotic if and only if 
\begin{equation}\label{Chaos Weighted}
    \sum_{n=1}^\infty  e_n\in X.
\end{equation} 

\begin{theorem}\label{Backward c_0}
Let $\{e_n\}_n$ be an unconditional basis on a  Fr\'echet space $X$ and let $B:X\to X$ be the backward shift defined in $\{e_n\}$. Then $B$ is chaotic if and only if it has dense small periodic sets.
\end{theorem}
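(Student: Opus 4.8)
The plan is to prove the two implications separately. The forward implication is a general triviality: if $B$ is chaotic its periodic points are dense, and every periodic point $p$ is a closed periodic set (indeed $B^k\{p\}\subseteq\{p\}$ when $B^kp=p$), so $B$ has dense small periodic sets. All the work is in the converse.

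Assume $B$ has dense small periodic sets. By the criterion \eqref{Chaos Weighted}, i.e. \cite[Theorem 8]{Gro00}, it suffices to show $\sum_{n=1}^\infty e_n\in X$. First I would apply Proposition \ref{dense small implica AP*} to the nonempty open neighbourhood $U=\{y\in X:|e_1^*(y)-1|<1/2\}$ of $e_1$, where $e_n^*$ denote the (continuous) coordinate functionals of the basis. This produces $k\in\mathbb{N}$ and a vector $x=\sum_n x_ne_n\in X$ with $B^{jk}x\in U$ for all $j\ge1$. Since $B^{jk}$ is continuous and linear, $e_1^*(B^{jk}x)=x_{1+jk}$, so $|x_{1+jk}-1|<1/2$ for every $j\ge1$; in particular each $x_{1+jk}$ is nonzero and $(x_{1+jk}^{-1})_{j\ge1}$ is a bounded scalar sequence.

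Next, since $\{e_n\}$ is unconditional, $\sum_n x_ne_n$ converges unconditionally, hence the subseries $\sum_{m\ge1}x_{1+mk}e_{1+mk}$ converges; multiplying it by the bounded sequence $(x_{1+mk}^{-1})_m$ and invoking the standard multiplier property of unconditional bases yields $\sum_{m\ge1}e_{1+mk}\in X$. To pass from this single arithmetic progression of indices to all of $\mathbb{N}$, apply the continuous operators $B^r$ for $r=0,1,\dots,k-1$: because $1+mk-r\ge2$ for every $m\ge1$, no terms vanish and $B^r\bigl(\sum_{m\ge1}e_{1+mk}\bigr)=\sum_{m\ge1}e_{1+mk-r}\in X$. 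Since the index sets $\{1+mk-r:m\ge1\}$, $r=0,\dots,k-1$, partition $\{n\in\mathbb{N}:n\ge2\}$, adding these $k$ vectors of $X$ gives $\sum_{n\ge2}e_n\in X$, hence $\sum_{n=1}^\infty e_n\in X$. By \cite[Theorem 8]{Gro00} this means $B$ is chaotic.

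The point to handle with care is the step ``$|x_{1+mk}-1|<1/2$ for all $m$ implies $\sum_{m\ge1}e_{1+mk}\in X$'': a priori dense small periodic sets only yields an approximate, periodic-type behaviour of a single orbit, and one upgrades it to an honest summability statement precisely through unconditionality (a bounded scalar multiple of an unconditionally convergent series is again convergent). The remaining ingredient — that summing the $k$ shifts $B^r$ of a series supported on one residue class mod $k$ recovers $\sum_n e_n$ — is the little trick that makes the weak hypothesis suffice; everything else is routine.
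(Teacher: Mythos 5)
Your proof is correct and follows essentially the same route as the paper's: both reduce the problem via \cite[Theorem 8]{Gro00} to showing $\sum_n e_n\in X$, extract from a small periodic set near $e_1$ a vector $x$ whose coordinates $x_{1+jk}$ stay uniformly away from $0$, use the bounded-multiplier property of the unconditional basis to conclude $\sum_m e_{1+mk}\in X$, and recover the full series by applying $B^0,\dots,B^{k-1}$ and summing over residue classes. The only cosmetic difference is that you define the neighbourhood of $e_1$ directly through the coordinate functional $e_1^*$, whereas the paper uses a continuous seminorm dominating the first coordinate; these are equivalent.
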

\begin{proof}
 By \eqref{Chaos Weighted} it is enough to show that $\sum_{n=1}^\infty e_{n}$ is convergent.

Let $\rho$ be a continuous seminorm such that for every $x,$ $|x_1|\leq \rho(x)$.
Since $B$ has dense small periodic sets there is $k\in \zN$ and $x$ such that  $B^{kn}(x)\in \f{1}{4}\{y: \rho (y)<1\}+e_1$ for every $n\geq 0$.  Thus $|x_1|\ge 1-\frac{1}{4}$ and  $\rho\l B^{nk}(x)-x\r< \f{1}{2}$ for every $n\in \zN$.  Then we have that  $|x_1- x_{1+nk}|=|e_1^*(x-B^{nk}(x))|< \f{1}{2}$ for every $n$.  Thus, $x_{1+nk}=  (x_1+\delta_n)$, where $\delta_n$ is a number of modulus less than $\f{1}{2}$. Note that, in particular we get that  $x_{1+nk}\neq 0$ for every $n$.
 
We consider now the series  $\sum_{n=1}^\infty e_{1+nk}=\sum_{n=1}^\infty \f{1}{x_1+\delta_n} x_{1+nk} e_{1+nk},$ which is (unconditionally) convergent by the unconditionallity of $\{e_n\}$. Finally we notice that 
$$
\sum_{n=1}^\infty  e_{n}=\sum_{j=0}^{k-1}\sum_{n\ge1}  e_{1+nk-j} =\sum_{j=0}^{k-1}B^j(\sum_{n\ge1} \f{1}{x_1+\delta_n} x_{1+nk} e_{1+nk}),$$
which is convergent. Therefore  $B$ is chaotic.
\end{proof}

\begin{corollary}
Let $\{e_n\}_n$ be an unconditional basis on a  Fr\'echet space $X$ and let $B_\omega:X\to X$ be a weighted backward shift defined in $\{e_n\}$. Then $B_\omega$ is chaotic if and only if it has dense small periodic sets.
\end{corollary}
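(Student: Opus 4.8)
The plan is to reduce to the unweighted case already treated in Theorem~\ref{Backward c_0} by a diagonal change of basis. Write $B_\omega e_1=0$ and $B_\omega e_n=\omega_n e_{n-1}$ for $n\geq 2$, where the weights $\omega_n$ are nonzero scalars (the argument is insensitive to the precise indexing convention used for the weights). Set $\lambda_1=1$ and $\lambda_n=\prod_{j=2}^{n}\omega_j^{-1}$ for $n\geq 2$, and put $f_n:=\lambda_n e_n$. Then $B_\omega f_n=\lambda_n\omega_n e_{n-1}=(\lambda_n\omega_n/\lambda_{n-1})\,f_{n-1}=f_{n-1}$ for $n\geq 2$ and $B_\omega f_1=0$; that is, $B_\omega$ is exactly the (unweighted) backward shift associated to the system $\{f_n\}_n$.

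The second step is to check that $\{f_n\}_n$ is again an unconditional basis of $X$. This follows from the fact that rescaling basis vectors by nonzero scalars leaves the partial sum projections unchanged: the coordinate functionals of $\{f_n\}_n$ are $f_n^{*}=\lambda_n^{-1}e_n^{*}$, which are continuous, and for every $x\in X$ one has $f_n^{*}(x)f_n=e_n^{*}(x)e_n$, so $x=\sum_n f_n^{*}(x)f_n$ with the very same partial sums as the expansion along $\{e_n\}_n$; uniqueness of the expansion is clear since each $\lambda_n\neq0$. Unconditionality of $\{f_n\}_n$ is inherited from that of $\{e_n\}_n$ for the same reason: for any bounded scalars $\epsilon_n$ (equivalently, for any permutation of $\zN$) the series $\sum_n\epsilon_n f_n^{*}(x)f_n=\sum_n\epsilon_n e_n^{*}(x)e_n$ converges. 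In particular $X$ carries the same topology when described through $\{f_n\}_n$, and $B_\omega$ is a (continuous) backward shift in that basis.

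Finally, apply Theorem~\ref{Backward c_0} to $B_\omega$ regarded as the backward shift with respect to the unconditional basis $\{f_n\}_n$: it is chaotic if and only if it has dense small periodic sets. Since chaos and the property of having dense small periodic sets are intrinsic properties of the pair $(X,B_\omega)$ and do not refer to any particular basis, the corollary follows. The only point that requires verification is the second step --- that the diagonal rescaling yields an unconditional Schauder basis of $X$ --- and I do not expect any genuine obstacle there; once $\{f_n\}_n$ is identified as such a basis, Theorem~\ref{Backward c_0} applies verbatim.
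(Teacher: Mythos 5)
Your proof is correct: the diagonal rescaling $f_n=\lambda_n e_n$ with $\lambda_n=\prod_{j=2}^{n}\omega_j^{-1}$ turns $B_\omega$ into the unweighted backward shift with respect to the unconditional basis $\{f_n\}_n$, and since both chaos and the existence of dense small periodic sets are basis-independent, Theorem~\ref{Backward c_0} applies. The paper states the corollary without proof, but this reduction to the unweighted case is precisely the intended (and standard) argument, so your proposal matches the paper's approach.
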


 

On the other hand we show next that there are weighted backward shifts on $c_0$ that are $\kAP$-hypercylic but that are neither upper frequently hypercyclic nor chaotic. 
In \cite{Bes16} the authors exhibited an example of a reiterative hypercyclic weighted shift on $c_0$ that is not upper frequently hypercyclic. 
 A closer look to their proof shows in fact that their operator is $\kAP$-hypercyclic.
\begin{theorem}\label{Without periodic sets}
Let $S=\bigcup_{l,j} [l 10^j-j, l10^j+j]$ and $(w_n)$ the sequence of weights defined by $$w_n=\begin{cases}
2 & \text{ if } n\in S\\
\prod_{l=1}^{n-1} w_l^{-1} & \text{ if } n\in S+1\setminus S\\
1&\text {else}.
\end{cases}$$
Then $T:=B_\omega:c_0\to c_0$ is $\kAP$-hypercyclic and has not dense small periodic points. In particular it is not chaotic.
\end{theorem}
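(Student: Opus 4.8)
The plan is to show two things about the operator $T = B_\omega$ on $c_0$ with the given weights: first that $T$ is $\kAP$-hypercyclic, and second that $T$ has no dense small periodic sets (from which non-chaoticity follows by Theorem \ref{Caoticos} or directly).

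For $\kAP$-hypercyclicity I would use Proposition \ref{equivalencias salap}, specifically condition (3): it suffices to check that $T$ is hypercyclic and that for every basic open set $U$ (a ball around a finitely supported vector $u$ with $\|u-x\|<\varepsilon$) there is a common difference $k$ so that $\bigcap_{j=1}^m T^{-jk}(U)\neq\emptyset$ for every $m$. Hypercyclicity of $B_\omega$ on $c_0$ holds iff $\sup_n \prod_{l=1}^n w_l = \infty$, and this is easy to verify from the definition since on the long blocks of $S$ the weights equal $2$ while the ``compensating'' coordinates in $S+1\setminus S$ are sparse; the partial products $\prod_{l=1}^n w_l$ therefore grow along the blocks $[l10^j-j, l10^j+j]$. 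For the arithmetic-progression condition, the key point is that the set $S$ contains, for each fixed $j$, arithmetic progressions of common difference $10^j$ and length growing with the number of admissible multiples $l$; more usefully, inside a single long block $[l10^j-j, l10^j+j]$ of length $2j+1$ the weights are all $2$, so shifting by $1$ repeatedly multiplies a coordinate by $2$ each time without passing through a small weight, which lets one place a spike of controlled height at prescribed positions. I would fix $k=1$ (or a fixed small $k$), choose $j$ large, and exhibit for each $m$ a vector in $\bigcap_{j'=1}^m T^{-j'k}(U)$ by putting the mass of $u$ far out inside a block of $S$ long enough to absorb $m$ shifts; the bounded-weight-product estimate along the block keeps the norm under $\varepsilon$. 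This mirrors exactly the argument in \cite{Bes16} showing reiterative hypercyclicity, so I would invoke that and simply note that the witnessing times they produce form arithmetic progressions of bounded common difference.

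For the absence of dense small periodic sets I would use the characterization in Proposition \ref{dense small implica AP*}: $T$ has dense small periodic sets iff for every nonempty open $U$ there is $k$ with $\bigcap_{j=1}^\infty T^{-jk}(U)\neq\emptyset$. I would suppose, for contradiction, that for some ball $U$ around a vector $u$ with $u_1\neq 0$ (say $\|u-x\|<1/4$, so $|x_1|>1/2$) there is $k$ and a point $x$ with $B_\omega^{jk}(x)\in U$ for all $j\ge 0$. As in the proof of Theorem \ref{Backward c_0}, this forces $|x_{1+jk}|$ bounded away from $0$ uniformly in $j$ (the first coordinate of $B_\omega^{jk}(x)$ is $(\prod_{l=1}^{jk} w_l)\, x_{1+jk}$ up to the weight bookkeeping, and it must stay within $1/4$ of $u_1$). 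The contradiction then comes from the structure of the weights: along the long blocks of $S$ the product $\prod_{l=1}^{n} w_l$ is large, so the corresponding coordinates $x_{1+jk}$ would have to be extremely small; but then at the next index — in $S+1\setminus S$, where $w_n = \prod_{l=1}^{n-1} w_l^{-1}$ is tiny — the product drops back, and tracking $|x_{1+jk}|$ across a block shows it cannot simultaneously satisfy the lower bound for all $j$. Concretely, since $x\in c_0$, the coordinates $x_{1+jk}\to 0$, contradicting the uniform lower bound; the real content is checking that the constraint $B_\omega^{jk}(x)\in U$ for all $j$ genuinely forces $|x_{1+jk}|\ge c>0$, which uses that the $j$-th iterate reads off the coordinate $x_{1+jk}$ weighted by an unbounded product along the right subsequence of $j$'s.

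The main obstacle I anticipate is the bookkeeping with the weights in the $\kAP$ part: one must choose the block index $j$ (as a function of $m$ and $\varepsilon$) so that a single block of $S$ is long enough to accommodate $m$ consecutive shifts of a spike while keeping all intermediate weight-products bounded by a constant depending only on the fixed data — the delicate point being that the "compensating" coordinates in $S+1\setminus S$ must be avoided by the support of the constructed vector. Since \cite{Bes16} already carries out essentially this construction for reiterative hypercyclicity, I would lean on their estimates and only add the observation that their return-time sets are, by construction, unions of long arithmetic progressions of bounded common difference; the non-chaoticity half is then the comparatively routine adaptation of the Theorem \ref{Backward c_0} computation to a contradiction rather than a conclusion.
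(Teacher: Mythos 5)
Your overall architecture (lean on \cite{Bes16} for the hypercyclicity half, analyze the first coordinate of $B_\omega^{jk}x$ for the other half) is reasonable, but both halves have concrete problems. In the $\kAP$ part, fixing $k=1$ cannot work: if $T^{n}y$ and $T^{n+1}y=T(T^ny)$ both lie in $B(u,\varepsilon)$ then $\|Tu-u\|\le(\|T\|+1)\varepsilon$, which fails for generic $u$ and small $\varepsilon$, so consecutive return times to a small ball are impossible and the common difference must depend on $U$. Your fallback sentence --- that the return times produced in \cite{Bes16} form arithmetic progressions of bounded common difference --- is the correct move and is exactly what the paper does, but it is not automatic: the paper verifies it by inspecting the sets $A_k=\bigcup_{j\in\phi^{-1}(k)}F_j$ fed into the B\`es--Menet--Peris--Puig $\mathcal F$-hypercyclicity criterion and noting that each $F_{j+1}$ is an arithmetic progression of length $>j$ with the \emph{fixed} step $10^{2k}$, so that $A_k\in\kAP$; without identifying that fixed step your claim is an assertion, not a proof.

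The second half has a genuine logical gap, and in fact a reversed implication. From $B_\omega^{jk}x\in U=B(u,1/4)$ with $u_1\neq 0$ you get $\bigl(\prod_{l=2}^{jk+1}w_l\bigr)|x_{jk+1}|\ge c>0$; this contradicts $x\in c_0$ only along those $j$ for which the partial product is \emph{bounded}, whereas you invoke ``an unbounded product along the right subsequence'' --- an unbounded product is precisely what would let $x_{jk+1}\to 0$ harmlessly. So the whole burden is to show that for \emph{every} $k$ the progression $\{jk+1\}$ contains infinitely many indices where $\prod_{l\le jk+1}w_l$ stays bounded. This is true but not free: the products reset to $1$ only at indices in $S+1\setminus S$, and for some $k$ (e.g.\ $k=10^5$, where $jk+1=10(j10^4)+1\in S$ for all $j$) the entire progression lies inside $S$, so one must argue that such terms sit at bounded depth inside their runs. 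You never do this. The paper sidesteps the issue entirely: by Theorem \ref{Backward c_0} and its corollary, a weighted backward shift has dense small periodic sets if and only if it is chaotic, and chaoticity fails here (the paper routes this through the fact, proved in \cite{Bes16}, that $T$ is not upper frequently hypercyclic while chaotic weighted shifts are; equivalently one can note that $\prod_{l\le n}w_l=1$ infinitely often, so the chaos criterion \eqref{Chaos Weighted} fails). Either you supply the missing uniform-boundedness-along-a-subsequence argument, or you should replace your direct computation by an appeal to Theorem \ref{Backward c_0}.
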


 The main argument used by the authors to prove that $T$ is reiterative hypercyclic is that $T$ satisfies the $\mathcal F$-hypercyclicity criterion applied to the family of sets with upper positive Banach density. Let us recall the criterion restricted to weighted shifts on $\ell_p$ or $c_0$.

\begin{theorem}[B\`es, Menet, Peris, Puig]
Let $\mathcal F$ be a Furstenberg family such that there exist  disjoint sets  $(A_k)_k\sub \mathcal F$  such that
\begin{itemize}
    \item [i)]  for any $j\in A_k$, any $j'\in A_{k'}$, $j\neq j'$ we have that $|j-j'|\geq \max\{k,k'\}$;
    \item [ii)]for any $k'\geq 0$ and any $k>k'$
    $$\sum_{n\in A_k+k'} \f{e_n}{\prod_{v=1}^n w_v}\in X \text{ and} \sum_{n\in A_k+k'} \f{e_n}{\prod_{v=1}^n w_v} \xrightarrow {k\to \infty}     0;$$
    \item [iii)] There are  $(C_{k,l})_{k,l}$ such that for every $k' \geq 0$, any $k>k'$ and any $l\geq1$,
    $$ \sup_{j\in A_l} \left\|\sum_{n\in A_k-j}\f{e_{n+k'}}{\prod_{v=1}^n w_{v+k'}}\right\| \leq C_{k,l}$$
    and such that $\sup_l C_{k,l}\to 0$ when $k\to \infty$ and such that for any $k$, $C_{k,l}\to 0$ when $l\to \infty$.
\end{itemize}
Then $B_w$ is $\mathcal F$-hypercyclic in $X=\ell_p$ or $c_0$.
\end{theorem}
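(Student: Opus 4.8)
This is the \emph{$\mathcal F$-hypercyclicity criterion} of B\`es, Menet, Peris, Puig, and the plan is to produce a single $\mathcal F$-hypercyclic vector by the series construction used for the Frequent Hypercyclicity Criterion, tuned to the family $\mathcal F$ instead of to $\underline{\mathcal D}$. Work in $X=\ell_p$ or $c_0$ with unconditional basis $\{e_n\}$, and let $S$ denote the densely defined right inverse of $B_w$, $Se_n=e_{n+1}/w_{n+1}$, so that $B_wS=\mathrm{id}$, $B_w^jS^j=\mathrm{id}$, $B_w^jS^i=S^{i-j}$ for $i>j$ and $B_w^jS^i=B_w^{j-i}$ for $i<j$; on finitely supported vectors $S^ie_m=\bigl(\prod_{v\le m}w_v/\prod_{v\le m+i}w_v\bigr)e_{m+i}$. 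Fix an enumeration $(y_l)_{l\ge1}$ of a countable dense set of finitely supported vectors, arranged so that $y_l$ is supported in $\{1,\dots,l\}$ and so that every vector of that dense set occurs as $y_l$ for infinitely many $l$.

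I would then set $v_l:=\sum_{n\in A_l}S^ny_l$ and $x:=\sum_{l\ge1}v_l$, after first passing, if necessary, to a sufficiently fast growing subsequence of $(A_l)_l$ with a compatible reindexing of $(y_l)_l$ so that the series and the error bounds below behave. Writing $y_l=\sum_{i\le l}a_i^{(l)}e_i$, one has $\sum_{n\in A_l}S^ne_i=\bigl(\prod_{v\le i}w_v\bigr)\sum_{m\in A_l+i}\frac{e_m}{\prod_{v\le m}w_v}$, which lies in $X$ by hypothesis (ii) and whose norm tends to $0$ as the index grows; hence each $v_l\in X$, and (after the reindexing) $\|v_l\|$ is as small as we wish. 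By (i) the translates $A_l+i$ are, for each fixed $i$, pairwise disjoint, so the supports of the $v_l$ overlap in a controlled way and $\sum_l v_l$ converges in $X$ (immediate in $c_0$; in $\ell_p$ one combines the decay furnished by (ii) with the disjointness of supports).

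Next, fix $l$ and $n\in A_l$; since $B_wS=\mathrm{id}$,
\[
B_w^n x=y_l+\underbrace{\sum_{\substack{n'\in A_l\\ n'>n}}S^{n'-n}y_l}_{\mathrm{(a)}}+\sum_{l'\ne l}\Bigl(\underbrace{\sum_{\substack{n'\in A_{l'}\\ n'>n}}S^{n'-n}y_{l'}}_{\mathrm{(b)}}+\underbrace{\sum_{\substack{n'\in A_{l'}\\ n'<n}}B_w^{n-n'}y_{l'}}_{\mathrm{(c)}}\Bigr).
\]
The terms in (c) vanish: by (i), $n-n'\ge\max\{l,l'\}\ge l'$ while $y_{l'}$ is supported in $\{1,\dots,l'\}$, so $B_w^{n-n'}y_{l'}=0$. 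For (a) and (b), expanding $y_{l'}=\sum_{i\le l'}a_i^{(l')}e_i$ and using the formula for $S^{n'-n}e_i$, the inner sum $\sum_{n'\in A_{l'},\,n'>n}S^{n'-n}e_i$ equals, up to the factor $\prod_{v\le i}w_v$ already present, the vector $\sum_{m\in A_{l'}-n}\frac{e_{m+i}}{\prod_{v=1}^{m}w_{v+i}}$, whose norm is $\le C_{l',l}$ by hypothesis (iii) (with $j=n\in A_l$ and $k'=i$). Summing over $i\le l'$ and then over $l'$, and invoking $\sup_{l'}C_{l',l}\to0$ and $C_{l',l}\to0$ together with the size control on the $y_{l'}$ arranged above, the whole remainder is bounded by some $\varepsilon_l$ with $\varepsilon_l\to0$, \emph{uniformly in $n\in A_l$}. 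Hence $B_w^n x\in B_{\varepsilon_l}(y_l)$ for every $n\in A_l$. Now given any nonempty open $U$, choose $l$ with $y_l\in U$ and $\varepsilon_l$ small enough that $B_{\varepsilon_l}(y_l)\subseteq U$ (possible since each dense vector recurs for arbitrarily large $l$); then $A_l\subseteq N_{B_w}(x,U)$, and since $A_l\in\mathcal F$ and $\mathcal F$ is hereditary upward, $N_{B_w}(x,U)\in\mathcal F$. As $U$ is arbitrary, $x$ is an $\mathcal F$-hypercyclic vector.

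I expect the main obstacle to be precisely the quantitative bookkeeping in the two previous steps: controlling the interference remainder $\mathrm{(a)}+\mathrm{(b)}$ \emph{uniformly over all} $n\in A_l$, while simultaneously making the errors summable in $l$ and keeping $x$ in $X$. Hypothesis (i) is what annihilates the backward terms (c) and separates the shifted blocks, hypothesis (ii) is what places $x$ in $X$, and the two-parameter constants $C_{k,l}$ of (iii), with decay in each variable, are tailored exactly to the forward up-shift terms; the genuinely delicate part is organizing the enumeration and the subsequence of $(A_k)_k$ so that the $\sum_{l'\ne l}C_{l',l}$-type quantities are small, after which the argument reduces to the standard Frequent Hypercyclicity Criterion computation.
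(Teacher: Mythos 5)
The paper does not prove this theorem: it is quoted (in the weighted-shift setting) from B\`es--Menet--Peris--Puig \cite{Bes16}, so there is no internal proof to compare against. Your proposal correctly reconstructs the standard argument from that reference --- the vector $x=\sum_l\sum_{n\in A_l}S^n y_l$, with (ii) giving membership in $X$, (i) annihilating the backward terms $B_w^{n-n'}y_{l'}$ and separating the blocks, and the two-variable decay of $C_{k,l}$ in (iii) controlling the forward interference coming from blocks $l'>l$ and $l'<l$ respectively --- and the bookkeeping you flag (passing to a fast subsequence of $(A_k)_k$ and arranging the enumeration $(y_l)_l$ so that the $\sum_{l'\neq l}C_{l',l}$-type sums are small uniformly over $n\in A_l$) is exactly where the remaining, routine work lies, as carried out in \cite{Bes16}.
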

\begin{proof}[Proof of Theorem \ref{Without periodic sets}] B\`es et. al. \cite{Bes16} proved that the operator is not upper frequently hypercyclic. Since a weighted backward shift on $c_0$ is chaotic if and only if it has dense small periodic sets and since chaotic weighted backward shifts are upper frequently hypercyclic, we conclude that the operator does not have dense small periodic sets.  

In \cite{Bes16}, sets $(A_k)_k$ satisfying $i-iii)$ of the above criterion and of positive Banach density were constructed. To prove that $T$ is $\kAP$-hypercyclic, it suffices to show that the sets $(A_k)_k$ chosen by the authors belong to $\kAP$.

Each $A_k$ is defined as $\cup_{j\in \phi^{-1}(k)} F_{j}$, where the  $\phi^{-1}(k)$ are disjoint infinite subsets of $\zN$ and the $F_{j}$ are defined as  $F_{j+1}:=\{10^{j_0}+10^{2k}l:0\leq l\leq l_0 \}$, where $l_0>j$ and $j_0$ is large enough (it is defined inductively). Thus, for each $j\in \phi^{-1}(k)$  each set $F_{j+1}$ is an arithmetic progression of length greater than $j$ with step $10^{2k}$. Since the set $\phi^{-1}(k)$ is infinite, we conclude that the sets $A_k$ have arbitrarily long arithmetic progressions with fixed step $10^{2k}$. 
\end{proof}

\subsection{The spectrum of an $\kAP$-hypercyclic operator}

In this subsection we study the spectrum of  $\kAP$-hypercyclic operators. Recall that  chaotic operators are easily seen to have perfect spectrum. In \cite{shk09}, Shkarin presented a very ingenious argument to prove that  frequently hypercyclic operators share the same property. We will see  that $\kAP$-hypercyclic operators  also have perfect spectrum.

Recall that an operator is said to be quasinilpotent provided that $\|T^n\|^\f{1}{n}\to 0.$
The proof of the next lemma is a modification of an analogous result for frequently hypercyclic operators (see \cite{shk09} or \cite[Lemma 9.38]{GroPer11}).
\begin{lemma}\label{lema qNIL}
	Let $S$ be an operator, $x^*\in X^*\setminus\{0\}$ and $U=\{y: Re(\left\langle y,x^*\right\rangle)>0,Re(\left\langle S(y),x^*\right\rangle)<0\}$. Suppose that for some $x\in U\setminus ker(x^*)$, \begin{equation}\label{eq lema espectro SALAP}
	\liminf_{k\to \infty}\frac{|N_S(x,U)\cap[0,k]|}{k+1}=\mu>0.    
	\end{equation}
	Then $S-I$ is not quasinilpotent.
	\end{lemma}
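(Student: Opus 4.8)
The plan is to adapt Shkarin's argument for frequently hypercyclic operators (as presented in \cite[Lemma 9.38]{GroPer11}) to the weaker density hypothesis \eqref{eq lema espectro SALAP}, which only asks for positive lower density of $N_S(x,U)$ along the subsequence of indices realizing the liminf. First I would argue by contradiction: suppose $S-I$ is quasinilpotent. The key geometric observation is that $U$ is the intersection of two open half-spaces determined by $x^*$: the condition $\operatorname{Re}\langle y,x^*\rangle>0$ together with $\operatorname{Re}\langle S(y),x^*\rangle<0$. For $y\in U$ one has, writing $z=S(y)-y=(S-I)(y)$, that $\operatorname{Re}\langle z,x^*\rangle=\operatorname{Re}\langle S(y),x^*\rangle-\operatorname{Re}\langle y,x^*\rangle<0$, so in fact $\operatorname{Re}\langle (S-I)(y),x^*\rangle<0$ on $U$, and moreover $\operatorname{Re}\langle y,x^*\rangle>0$ on $U$.

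The heart of the argument is a telescoping/averaging estimate. Set $a_n=\operatorname{Re}\langle S^n(x),x^*\rangle$. Whenever $n\in N_S(x,U)$ we have $S^n(x)\in U$, hence $a_n>0$ and $a_{n+1}-a_n=\operatorname{Re}\langle (S-I)S^n(x),x^*\rangle<0$. The next step is to control the full sum $\sum_{n=0}^{k}(a_{n+1}-a_n)=a_{k+1}-a_0$: on the one hand this telescopes, and on the other hand, splitting the indices into those in $N_S(x,U)\cap[0,k]$ (which contribute strictly negative terms that one wants to bound away from zero on average) and the remaining indices (whose contribution one bounds crudely using quasinilpotence of $S-I$). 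The point of quasinilpotence is that $\|(S-I)^m\|^{1/m}\to 0$ forces $\sum_m\|(S-I)^m\|<\infty$ after passing to a suitable reparametrization, or more directly that $\|(S-I)S^n(x)\|$ cannot be too large too often; one combines this with the density hypothesis to show that the negative contributions along $N_S(x,U)$ accumulate linearly in $k$ while $a_{k+1}$ must stay bounded (again using quasinilpotence, since $S^n=( (S-I)+I)^n$ and one estimates the growth of $\langle S^n(x),x^*\rangle$). This yields $a_{k+1}\to-\infty$, contradicting $a_{k+1}>0$ on the infinite set $N_S(x,U)$, or more carefully a contradiction with the boundedness forced on $a_{k+1}$.

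More precisely, I would use the resolvent/Neumann-type expansion: since $S-I=:N$ is quasinilpotent, for any $\lambda$ with $\operatorname{Re}\lambda$ slightly negative the operator $S-\lambda I=N+(1-\lambda)I$ is invertible with a good bound, and $\langle S^n(x),x^*\rangle$ can be recovered from a contour integral of $\lambda^n(S-\lambda I)^{-1}x$ against $x^*$; the quasinilpotence lets one deform the contour to see that $\operatorname{Re}\langle S^n(x),x^*\rangle$ grows subexponentially, in fact is $O(\varepsilon^n)$-controlled in a suitable averaged sense. Feeding this into the telescoping identity against the positive-density set of indices where each increment is $<0$ with, crucially, a quantitative negative bound coming from the fixed vector $x\in U\setminus\ker(x^*)$ and continuity (on a slightly shrunk subset of $U$ the increment is $\le-c<0$), one forces $a_{k+1}\le a_0-c\mu k/2+o(k)\to-\infty$, the desired contradiction.

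The main obstacle I anticipate is making the ``negative increments are bounded away from zero on a positive-density subset'' step rigorous: membership in the open set $U$ only gives a strict but possibly tiny negative increment, so one must replace $U$ by the hypothesis-preserving smaller set where $\operatorname{Re}\langle y,x^*\rangle>\delta$ and $\operatorname{Re}\langle S(y),x^*\rangle<-\delta$ for a fixed $\delta>0$ — but a priori the positive-density hypothesis is stated for $U$ itself, not for this shrunken set. Resolving this requires either (a) observing that in the intended application $x$ is a genuine return point so the orbit actually enters a fixed smaller neighborhood with positive density, or (b) running the telescoping argument with the unquantified increments and extracting the needed linear growth purely from the resolvent estimate, using that $a_n>0$ on a positive-density set while $a_{n+1}-a_n<0$ there and $\sum|a_{n+1}-a_n|$ over the complement is $o(k)$. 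I would pursue route (b), since it keeps the hypothesis exactly as stated, and the delicate point becomes the quasinilpotence-based bound $\sum_{n\notin N_S(x,U),\,n\le k}|\operatorname{Re}\langle N S^n(x),x^*\rangle|=o(k)$, which should follow from $\|N S^n x\|\to$ small geometrically along a full-density set of $n$ — itself a consequence of $\|N^{1/2}\|$-type estimates combined with the spectral radius of $N$ being $0$.
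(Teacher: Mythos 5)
There is a genuine gap: the telescoping/averaging mechanism at the heart of your proposal cannot be made to work, and you have in fact identified both obstructions without resolving either. First, membership of $S^n(x)$ in the open set $U$ gives $a_{n+1}-a_n<0$ with no quantitative lower bound on $|a_{n+1}-a_n|$, and the hypothesis is stated for $U$ itself, so your route (a) is unavailable. Second, route (b) hinges on the claim that $\sum_{n\le k,\,n\notin N_S(x,U)}|Re\langle (S-I)S^n(x),x^*\rangle|=o(k)$, but quasinilpotence of $N=S-I$ gives no such control on $NS^n$: writing $S^n=\sum_j\binom{n}{j}N^j$, the quantities $\|S^n\|$ and $\|NS^nx\|$ can grow unboundedly (e.g.\ polynomially of any prescribed degree when $N$ is nilpotent of high order), so neither is $a_{k+1}$ bounded nor is the complementary contribution $o(k)$. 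Subexponential growth of $a_n$, which is all the resolvent/contour estimate yields, is far too weak to close a linear-in-$k$ telescoping contradiction.

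The paper's proof (Shkarin's argument) uses a completely different, complex-analytic mechanism: it counts sign changes rather than estimating magnitudes. One interpolates $a_n=Re\langle S^nx,x^*\rangle$ at the integers by the Newton series $f(z)=\sum_k Re\langle (S-I)^kx,x^*\rangle\binom{z}{k}$ (the binomial theorem gives $f(n)=a_n$); the bound $\|(S-I)^k\|\le M\varepsilon^k$ makes $f$ entire with $|f|\le M\|x\|\|x^*\|(1-\varepsilon)^{-R}$ on $|z|\le R$, i.e.\ of arbitrarily small exponential type, with $f(0)=1$ after normalization. Each $n\in N_S(x,U)$ forces $f(n)>0>f(n+1)$, hence a zero of $f$ in $(n,n+1)$, so $f$ has at least $|N_S(x,U)\cap[0,k]|$ zeros in $\{|z|<k+1\}$; Jensen's formula caps the zero count by $c-2R\log(1-\varepsilon)/\log 2$, whose linear rate can be made smaller than $\mu$ by shrinking $\varepsilon$, contradicting the density hypothesis. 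Your proposal correctly exploits the structure of $U$ (that return times give $a_n>0>a_{n+1}$), but the step that makes the lemma true --- converting those sign changes into zeros of an entire function of type zero and invoking Jensen --- is absent, and the substitute you propose does not hold.
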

	
	\begin{proof}
Replacing $x^*$ by $\f{x^*}{Re(\langle x,x^*\rangle)}$ we can suppose that $Re(\langle x,x^*\rangle)=1$.

Suppose that $S-I$ is quasinilpotent. Then, given $\varepsilon>0$, there is some constant $M>0$ such that $\|(S-I)^k\|\le M\varepsilon^k,$ for every $k$. Thus we have for $z\in \mathbb C,$ and $|z|\le R$ that
\begin{align*}
\sum_{k=0}^\infty |Re(\langle (S-I)^kx,x^*\rangle )|\left|\frac{z(z-1)\dots(z-k+1)}{k!}\right| &\le M\|x\|\|x^*\| \sum_{k=0}^\infty \varepsilon^k\frac{R(R+1)\dots(R+k-1)}{k!} \\
&= \frac{M\|x\|\|x^*\|}{(1-\varepsilon)^R} 
\end{align*}
where we have used the generalized binomial theorem.

This implies that 
$$
f(z)=\sum_{k=0}^\infty Re(\langle (S-I)^kx,x^*\rangle)\frac{z(z-1)\dots(z-k+1)}{k!}
$$
defines an entire function of exponential type 0, such that $f(0)=\langle x,x^*\rangle=1$. Therefore, as a consequence of Jensen's formula, the number or zeros on the disk $\{|z|<R\}$, $n(R)$ is bounded above by
$$
\frac{\log(M\|x\|\|x^*\| (1-\varepsilon)^{-2R})}{\log 2}= c-2R\frac{\log(1-\varepsilon)}{\log 2}.
$$
Thus, we have
$$
\frac{n(k+1)}{k+1}\le \frac{c}{k+1}-\frac{2 (k+1)\log(1-\varepsilon)}{(k+1)\log 2} \to -\frac{2\log(1-\varepsilon)}{\log 2}.        
$$
This contradicts \eqref{eq lema espectro SALAP} because $\varepsilon$ can be chosen arbitrarily close to 0, and $|N_S(x,U)\cap[0,k]|\le n(k+1)$. Indeed, since 
\begin{align*}
f(n) &=\sum_{k=0}^n Re(\langle (S-I)^kx,x^*\rangle)\frac{n(n-1)\dots(n-k+1)}{k!}\\ 
&=  Re(\langle \sum_{k=0}^n \binom{n}{k}(S-I)^k I^{n-k}x,x^*\rangle) =Re( \langle S^nx,x^*\rangle),
\end{align*}
we have that $n\in N_S(x,U)$ if and only if $f(n)>0$ and $f(n+1)<0$. Finally, since $f|_{\mathbb R}$ is real valued, $f$ must have at least a zero in the open interval $(n,n+1)$.
\end{proof}

 We show now that $\kAP$-hypercyclic operators satisfy Ansari's property. Note that since $\kAP$ is neither Ramsey nor CuSP then this does not follow from \cite{shk09,bonilla2020frequently}.
 \begin{proposition}
 Let $T$ be an $\kAP$-hypercyclic operator. Then for every natural number $p$ we have that $T^p$ is $\kAP$-hypercyclic. Moreover they share the $\kAP$-hypercyclic vectors.
 \end{proposition}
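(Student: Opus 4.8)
The plan is to reduce everything to the transitivity reformulation of $\kAP$-hypercyclicity provided by item (3) of Proposition~\ref{equivalencias salap}, together with the classical Ansari Theorem (which is the only ingredient external to this paper). I would first resist the temptation to argue at the level of a single orbit: if $x$ is an $\kAP$-hypercyclic vector for $T$ and $N_T(x,U)$ contains arbitrarily long arithmetic progressions of common difference $k$, one would like to extract from these long progressions lying inside $N_{T^p}(x,U)=\{n:\, pn\in N_T(x,U)\}$, but this forces a divisibility constraint on the \emph{initial terms} of the progressions, not merely on $k$, and there is no reason for such a constraint to hold. Passing to the transitivity condition, where the intermediate points are completely unconstrained, avoids this obstruction.

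Concretely: from the hypothesis and item (3) of Proposition~\ref{equivalencias salap} I would record that $T$ is hypercyclic and that for every nonempty open set $U$ there is $k=k(U)\in\zN$ with $\bigcap_{j=1}^{m}T^{-jk}(U)\neq\emptyset$ for all $m$; and from Ansari's Theorem (see, e.g., \cite{GroPer11}) that $T^p$ is hypercyclic, with the same hypercyclic vectors as $T$. Then, for a fixed $U$ and its $k$, I would check the one-line inclusion
\[
\bigcap_{j=1}^{m}(T^{p})^{-jk}(U)=\bigcap_{j=1}^{m}T^{-jpk}(U)\ \supseteq\ \bigcap_{l=1}^{pm}T^{-lk}(U)\ \neq\ \emptyset
\qquad(m\in\zN),
\]
valid because $\{p,2p,\dots,mp\}\subseteq\{1,2,\dots,pm\}$, so the right-hand intersection is cut out by the larger family of conditions. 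This shows $T^p$ satisfies item (3) of Proposition~\ref{equivalencias salap} (with the same $k$), hence $T^p$ is $\kAP$-hypercyclic.

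For the assertion about vectors, I would invoke the equivalence of items (1) and (2) of Proposition~\ref{equivalencias salap} twice. Since $T$ is $\kAP$-hypercyclic, item (1) gives that every hypercyclic vector of $T$ is an $\kAP$-hypercyclic vector of $T$; as the reverse inclusion is trivial, the two sets of vectors coincide, and the same applies to $T^p$ (just shown to be $\kAP$-hypercyclic). Combining this with the fact, again from Ansari's Theorem, that $T$ and $T^p$ have the same hypercyclic vectors, I would conclude that $T$ and $T^p$ share their $\kAP$-hypercyclic vectors. The whole argument is short; the only point to get right is to run it through item (3) rather than through a single orbit, since raising to the $p$-th power destroys the arithmetic progressions attached to an individual hypercyclic vector but only thins an already nonempty intersection in the transitivity condition.
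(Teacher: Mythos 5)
Your proof is correct and follows essentially the same route as the paper: Ansari's Theorem for hypercyclicity of $T^p$, item (3) of Proposition \ref{equivalencias salap} together with the inclusion $\bigcap_{j=1}^{m}T^{-jpk}(U)\supseteq\bigcap_{l=1}^{pm}T^{-lk}(U)$, and then items (1)--(2) plus Ansari again for the shared $\kAP$-hypercyclic vectors. Your preliminary remark on why the single-orbit approach fails is a useful observation but not needed for the argument.
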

 \begin{proof}
	Since $T$ is hypercyclic, it follows by Ansari's Theorem  that $T^p$ is hypercyclic. Since, by Proposition \ref{equivalencias salap} for each open set $U$ there is $k$ such that for every $m$ , $\bigcap_{j=1}^m T^{-jk}(U)\neq \emptyset,$ we have that  for every $m$ , $\bigcap_{j=1}^m T^{-jkp}(U)\neq \emptyset.$ Applying again Proposition \ref{equivalencias salap}, we are done.
\end{proof}
Note that by Proposition \ref{equivalencias salap} every hypercyclic vector of an $\kAP$-hypercyclic operator is an $\kAP$-hypercyclic vector and by Ansari's Theorem the hypercyclic vectors of $T$ and $T^p$ coincide. We conclude that every hypercyclic vector of $T$ must be an $\kAP$-hypercyclic vector of $T^p$. 
On the other hand, the rotations of  $\kAP$-hypercyclic operators need not to be $\kAP$-hypercyclic.  
\begin{remark}
There are an $\kAP$-hypercyclic operator and $\lambda\in\zT$ such that $\lambda T$ is not $\kAP$-hypercyclic.
\end{remark}
\begin{proof}
 It is known that there are a chaotic operator $T$ and $\lambda\in \zT$ in a Hilbert space  such that $\lambda T$ is not chaotic, see \cite{BayBer09}. Hence, $T$ is $\kAP$-hypercyclic and by Theorem \ref{Caoticos}, $\lambda T$ is not $\kAP$-hypercyclic.
\end{proof}

 \begin{theorem}\label{teo qNIL}
Let $T$ be a $\kAP$-hypercyclic operator on a $\mathbb K$-Banach space. Then $T-\lambda Id$ is not quasinilpotent for any $|\lambda|=1$.
\end{theorem}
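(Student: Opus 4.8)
The plan is to re-run the Shkarin--Jensen argument behind Lemma~\ref{lema qNIL}, but to feed it arithmetic progressions instead of a set of positive lower density. The point is that an arithmetic progression of length $m$ with a \emph{fixed} common difference $k$ sitting inside a set of return times already forces $\gtrsim m$ zeros of a suitable exponential–type–$0$ entire function inside the disc $\{|z|<mk\}$, and, once all constants are made independent of $m$, this contradicts Jensen's bound. So the proof will not invoke Lemma~\ref{lema qNIL} as a black box; it will adapt its proof.

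Assume, for a contradiction, that $N:=T-\lambda\,\mathrm{Id}$ is quasinilpotent for some $|\lambda|=1$. If $\lambda$ is a root of unity of order $q$, I would first replace $T$ by $T^q$: it is again $\kAP$-hypercyclic (by the Proposition just above the statement), and $T^q-\mathrm{Id}=T^q-\lambda^q\mathrm{Id}=(T-\lambda\,\mathrm{Id})(T^{q-1}+\lambda T^{q-2}+\dots+\lambda^{q-1}\mathrm{Id})$ is still quasinilpotent (product of commuting operators, one quasinilpotent). Hence one may assume either $\lambda=1$ or $\lambda$ is not a root of unity. Now fix any $x^*\in X^*\setminus\{0\}$. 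Since $T$ is hypercyclic, $T^*$ has no eigenvalue, and a short computation shows this is exactly what guarantees that the open cone $U:=\{y:\operatorname{Re}\langle y,x^*\rangle>0,\ \operatorname{Re}(\bar\lambda\langle Ty,x^*\rangle)<0\}$ is nonempty. Pick $y_0\in U$, write $\beta:=\langle y_0,x^*\rangle$ (so $\operatorname{Re}\beta>0$), and choose $\rho>0$ so small that the ball $U':=B(y_0,\rho)$ satisfies $U'\subset U$, $\operatorname{Re}\langle y,x^*\rangle\ge\gamma_0$ on $U'$ for a fixed $\gamma_0>0$, $|\langle y,x^*\rangle-\beta|<\tfrac14|\beta|$ on $U'$, and $\|y\|\le R_0:=\|y_0\|+\rho$ on $U'$. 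By $\kAP$-hypercyclicity and Proposition~\ref{equivalencias salap}, a hypercyclic vector $x$ has $N_T(x,U')\in\kAP$, so there is a single $k\in\zN$ such that for every $m$ there is $a_m$ with $T^{a_m+jk}x\in U'$ for $0\le j\le m-1$. Put $v_m:=T^{a_m}x\in U'$; then $\|v_m\|\le R_0$, $\operatorname{Re}\langle v_m,x^*\rangle\ge\gamma_0$, $T^{jk}v_m\in U'\subset U$ for $0\le j\le m-1$, and, exactly as in the proof of Lemma~\ref{lema qNIL}, the function
$$g_m(z):=\sum_{l\ge 0}\binom{z}{l}\operatorname{Re}\!\big(\lambda^{-l}\langle N^l v_m,x^*\rangle\big)$$
is entire, real on $\zR$, of exponential type $0$, with $g_m(n)=\operatorname{Re}(\bar\lambda^{\,n}\langle T^n v_m,x^*\rangle)$, $g_m(0)=\operatorname{Re}\langle v_m,x^*\rangle\ge\gamma_0$, and $|g_m(z)|\le M_\epsilon R_0\|x^*\|(1-\epsilon)^{-|z|}$ for $|z|=R$ and every $\epsilon\in(0,1)$, where $\|N^l\|\le M_\epsilon\epsilon^l$. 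Crucially $R_0$, $\gamma_0$ and (for fixed $\epsilon$) $M_\epsilon R_0\|x^*\|$ do not depend on $m$.

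Next I produce the zeros. If $\lambda=1$ then $\bar\lambda^{\,jk}=1$, so $g_m(jk)=\operatorname{Re}\langle T^{jk}v_m,x^*\rangle>0$ and $g_m(jk+1)=\operatorname{Re}\langle T(T^{jk}v_m),x^*\rangle<0$, both from $T^{jk}v_m\in U$; hence $g_m$ changes sign on each of the $m$ disjoint intervals $(jk,jk+1)$ and has $\ge m$ real zeros in $(0,mk)$. If $\lambda=e^{i\theta}$ is not a root of unity, then $|\langle T^{jk}v_m,x^*\rangle-\beta|<\tfrac14|\beta|$ gives $\big|g_m(jk)-|\beta|\cos(\arg\beta-jk\theta)\big|<\tfrac14|\beta|$; since $\theta/2\pi$ is irrational, $(\,\arg\beta-jk\theta\bmod 2\pi\,)_{j\ge 0}$ equidistributes and so passes alternately through the arcs $\cos\ge\tfrac12$ and $\cos\le-\tfrac12$ at least $c_\theta m$ times over $0\le j\le m-1$, for some $c_\theta>0$ depending only on $\theta$ and $k$; at each such passage $g_m(jk)$ moves from a value $\ge\tfrac14|\beta|$ to one $\le-\tfrac14|\beta|$, so $g_m$ has $\ge c_\theta m$ real zeros in $(0,mk)$. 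Put $c:=1$, resp. $c:=c_\theta$. By Jensen's formula the number $n(R)$ of zeros of $g_m$ in $\{|z|<R\}$ is at most $\tfrac{1}{\log 2}\big(\log(M_\epsilon R_0\|x^*\|/\gamma_0)-2R\log(1-\epsilon)\big)$; evaluating at $R=mk$ and using $n(mk)\ge cm$ yields $m\big(c+\tfrac{2k\log(1-\epsilon)}{\log 2}\big)\le\tfrac{1}{\log 2}\log(M_\epsilon R_0\|x^*\|/\gamma_0)$. Choosing $\epsilon$ so small that $\tfrac{2k|\log(1-\epsilon)|}{\log 2}<c/2$ forces $m\,c/2\le\mathrm{const}$ for every $m$, the desired contradiction.

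The step I expect to be most delicate is making the Jensen estimate uniform in $m$: unlike in Lemma~\ref{lema qNIL} there is no single orbit returning to $U$ with positive lower density, so one must work with the family $(v_m)_m$, and the argument collapses unless $\|v_m\|$ and $|g_m(0)|$ are controlled uniformly — which is precisely why the test set is taken to be a \emph{bounded} ball sitting inside the cone $U$ rather than the cone itself. The second point needing care is the rotation by $\lambda$: for $\lambda=1$ (and, after passing to a power, for roots of unity) the two-sided cone of Lemma~\ref{lema qNIL} directly produces the sign changes, whereas for an irrational rotation one must instead read them off from the equidistribution of $(jk\theta)_j$, using that $k$ — the common difference furnished by $\kAP$-hypercyclicity for the fixed ball $U'$ — is independent of $m$, so that $c_\theta$ and all remaining constants are too.
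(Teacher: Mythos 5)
Your argument is correct in outline and reaches the theorem by a genuinely different route from the paper's. The paper does not re-run the Jensen computation at all: it passes to the bidual and uses Proposition \ref{prop AP* implica ptos periodicos en el bidual} (weak$^*$-compactness of balls in $X^{**}$) to upgrade the arbitrarily long \emph{finite} progressions into a single point $y$ whose entire $(T^{**})^{m}$-orbit stays in $\interior{\overline V}^{\omega^*}$; that point satisfies $\underline{dens}(N_S(y,U))\ge 1/m>0$ and Lemma \ref{lema qNIL} is then invoked as a black box (in the irrational case after rotating the operator, $S=e^{-2\pi i\theta}T^{**}$, and using equidistribution of $(jm\theta)_j$ to retain positive lower density of returns to $U$). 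You instead stay in $X$, work with the family $v_m=T^{a_m}x$ carrying length-$m$ progressions of common difference $k$, and make every constant in the Jensen estimate uniform in $m$, so that $\gtrsim m$ zeros appear in a disc of radius $mk$ while the Jensen bound grows like $2mk|\log(1-\varepsilon)|/\log 2$. The quantifier order (fix $U'$, get $k$, get $c$, then choose $\varepsilon$ with $2k|\log(1-\varepsilon)|/\log 2<c/2$, then let $m\to\infty$) is the crux and you have it right. What you lose is the ability to quote Lemma \ref{lema qNIL} verbatim; what you gain is a proof that needs neither the bidual nor any compactness argument, showing that the finite progressions already force the spectral conclusion.

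The one step you should not leave as an assertion is the claim that the rotation $(jk\theta)_j$ produces at least $c_\theta m$ \emph{alternations} between the arcs $\{\cos\ge 1/2\}$ and $\{\cos\le -1/2\}$ among $0\le j\le m-1$. Equidistribution gives $\sim m/3$ visits to each arc, but visit counts alone do not bound alternations from below. A correct quantitative substitute: choose $r$ with $rk\theta$ within $\pi/6$ of $\pi$ modulo $2\pi$; then every $j$ with $\arg\beta-jk\theta$ in the middle third of the positive arc gives $g_m(jk)>0$ and $g_m((j+r)k)<0$, equidistribution supplies $\ge c'm$ such $j\le m-1$, and thinning to an $r$-separated subset yields $\ge c'm/r$ pairwise disjoint intervals $(jk,(j+r)k)$ each containing a real zero of $g_m$ --- still linear in $m$ with constants depending only on $\theta$ and $k$, so your contradiction goes through unchanged.
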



\begin{proof}
	Let $\lambda=e^{2\pi i\theta}$. Suppose first that  $\theta=\frac{p}{q}$ a rational angle.
	
	Note that if $T-\lambda I$ quasinilpotent, $q\in \mathbb N$ then $(T^{**})^q-\lambda^qI=(T^q)^{**}-I$ is a quasinilpotent operator on $X^{**}$.
	
	We will apply the above lemma for $S=(T^q)^{**}$. Let $x^*\in X^*\setminus\{0\}$ and $U=\{y\in X^{**}: Re(\left\langle y,x^*\right\rangle)>0,Re(\left\langle S(y),x^*\right\rangle)<0\}$.
	Note that since $T^q$ is hypercyclic, $U\neq \emptyset$ and, moreover, it contains a nonempty open ball $V$ of $X$ such that $\interior{\overline V}^{\omega^*}\subset U$.
	Then, since $T^q$ is $\kAP$-hypercyclic, Proposition \ref{prop AP* implica ptos periodicos en el bidual} implies that there are $x\in \interior{\overline V}^{\omega^*}$ and $m\in \zN$ for which $S^{jm}(x)\in \interior{\overline V}^{\omega^*}$ for every $j\in \zN$. In particular $\liminf_{k\to \infty}\frac{|N_S(x,U)\cap[0,k]|}{k+1}\ge \frac1{m}>0$.

	Therefore we have that $S-I$ and hence $T-\lambda I$ is not quasinilpotent for $|\lambda|=1$ with rational angle.

Suppose suppose now that $\theta$ is an irrational angle. Note that it suffices to prove that $S-I$ is not quasinilpotent, where $S=e^{-2\pi i\theta}T^{**}$.


Let $U$ be the open set of $X^{**}$ defined in Lemma \ref{lema qNIL} for $S$. Since $e^{-2\pi i\theta}T$ is hypercyclic, 
$U$ is non-empty and contains a nonempty open ball $V$ of $X$ such that $\interior{\overline V}^{\omega^*}\subset U$. 

For small $\delta>0$ let $V_\delta:=\{x\in V:\, d(x,V^c)>\delta\textrm{ and }\|x\|<1/\delta\}$. Since $T$ is $\kAP$-hypercyclic, by Proposition \ref{prop AP* implica ptos periodicos en el bidual}, there is some $x\in \interior{\overline{ V_\delta}}^{\omega^*}$ and $m$ such that $T^{jm}x\in \interior{\overline{ V_\delta}}^{\omega^*}\subset U$ for every $j\in\mathbb N.$

We claim that if $\varepsilon<\delta^2/4\pi$, $y\in \interior{\overline{ V_\delta}}^{\omega^*}$ and $\varphi\in p+(-\varepsilon,\varepsilon)$ for some $p\in\mathbb Z$, then $e^{2\pi i \varphi}y\in \interior{\overline{ V}}^{\omega^*}\subset U$. Indeed, if $z\notin \interior{\overline{ V}}^{\omega^*}$,
$$
\|e^{2\pi i \varphi}y-z\|\ge\|y-z\|-\|y(1-e^{2\pi i \varphi})\|\ge \delta-\frac1{\delta}\varepsilon 2\pi>\delta/2.
$$
	
Define now
$$
A:=\{j:-jm\theta\in p+(-\varepsilon,\varepsilon)\textrm{ for some }p\in\mathbb Z\}.
$$ 
Since $m\theta$ is irrational, $\underline {dens}(A)>0$, and by the claim, 
$$
A\subset \{j:S^{mj}x\in U \}.
$$	
Thus
$$
0<\underline{dens}(A)\le m\cdot \underline{dens}(N_S(x,U)).
$$	
Therefore by Lemma \ref{lema qNIL}, $S-I$ is not quasinilpotent. 
\end{proof}

\begin{corollary}\label{espectro}
The spectrum of an $\kAP$-hypercyclic operator cannot have isolated points.
\end{corollary}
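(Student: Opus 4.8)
The plan is to localize the operator at a putative isolated point of its spectrum by means of the Riesz functional calculus, reducing to an operator with one‑point spectrum, and then to invoke Theorem~\ref{teo qNIL}. We may assume the scalar field is $\mathbb{C}$; in the real case one first passes to the complexification $\widetilde T$, which is again $\kAP$-hypercyclic (using the characterization in Proposition~\ref{equivalencias salap}: $\kAP$-hypercyclic operators are weakly mixing, so $\widetilde T=T\oplus T$ is hypercyclic, and the condition $\bigcap_{j=1}^m T^{-jk}(U)\neq\emptyset$ transfers to $T\oplus T$ after replacing the two step sizes by a common multiple). Suppose then that $\lambda_0$ is an isolated point of $\sigma(T)$, and let $P=\frac{1}{2\pi i}\oint_\gamma(zI-T)^{-1}\,dz$ be the Riesz (spectral) projection associated with $\{\lambda_0\}$, where $\gamma$ is a small circle separating $\lambda_0$ from $\sigma(T)\setminus\{\lambda_0\}$. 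Then $P$ is a bounded idempotent commuting with $T$, the subspace $X_1:=PX$ is closed and $T$-invariant, $X_1\neq\{0\}$ because $\lambda_0\in\sigma(T)$, and the restriction $T_1:=T|_{X_1}$ satisfies $\sigma(T_1)=\{\lambda_0\}$.

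The one step that really uses the hypothesis is that $T_1$ is again $\kAP$-hypercyclic. I would check this directly: if $x$ is an $\kAP$-hypercyclic vector of $T$, then $Px\in X_1$ is one for $T_1$, since for every nonempty open $U\subseteq X_1$ the set $P^{-1}(U)$ is open and nonempty in $X$ and, because $PT=TP$, one has $T_1^{\,n}(Px)=P(T^n x)$, so that $N_{T_1}(Px,U)=N_T(x,P^{-1}(U))\in\kAP$. In particular $T_1$ is hypercyclic, hence $X_1$ is infinite dimensional and separable and $T_1$ is an operator to which Theorem~\ref{teo qNIL} applies. Since $T_1$ is hypercyclic, every connected component of $\sigma(T_1)$ meets the unit circle (see, e.g., \cite{GroPer11}); as $\sigma(T_1)=\{\lambda_0\}$, this forces $|\lambda_0|=1$. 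Now Theorem~\ref{teo qNIL} says that $T_1-\lambda_0\,Id$ is not quasinilpotent, whereas $\sigma(T_1-\lambda_0\,Id)=\{0\}$ says it is; this contradiction shows $\sigma(T)$ has no isolated points.

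I do not expect a serious obstacle: modulo Theorem~\ref{teo qNIL} the argument is a routine Riesz‑projection reduction, and the only points needing genuine (though short) verification are the descent of $\kAP$-hypercyclicity from $T$ to the spectral summand $X_1$ through $P^{-1}(\cdot)$ and, for real scalars, the passage to the complexification. If one prefers not to quote the classical fact that each component of the spectrum of a hypercyclic operator meets $\mathbb{T}$, one can instead note directly that hypercyclicity of $T_1$ forces its spectral radius to be $\ge 1$, so $|\lambda_0|\ge 1$, while $|\lambda_0|>1$ would make $T_1$ invertible with $\|T_1^{-n}\|\to 0$, which is incompatible with the existence of a dense orbit.
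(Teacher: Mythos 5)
Your proof is correct and follows essentially the same route as the paper: a Riesz spectral projection onto the summand for the isolated point, descent of $\kAP$-hypercyclicity to that summand via the quasiconjugacy $P$, the observation that $|\lambda_0|=1$ by hypercyclicity, and the contradiction with Theorem~\ref{teo qNIL}. Your write-up is in fact more careful than the paper's (which states the quasiconjugacy step only in passing and omits the real-scalar case), but it is the same argument.
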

\begin{proof}
If $\lambda$ is an isolated point of the spectrum of a hypercyclic operator then by the Riesz decomposition Theorem and the fact that the property of having dense small periodic sets is preserved under quasiconjugacies,  we may construct an operator $S$ having dense small periodic sets and such that $\sigma(T)=\lambda$. Since $T$ is hypercyclic, it would be of the form $T=S+\lambda I$ for some $|\lambda|=1$. By the spectral radius formula, $S$ would be quasinilpotent, contradicting Theorem \ref{teo qNIL}.
\end{proof}

\begin{corollary}
There are not  $\kAP$-hypercyclic operators on hereditarily indecomposable Banach spaces.
\end{corollary}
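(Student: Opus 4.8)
The plan is to combine the spectral structure theory of operators on HI spaces with Corollary \ref{espectro}. Recall the Gowers--Maurey theorem: on a complex HI Banach space, every bounded operator $S$ has the form $S = \mu I + K$ with $\mu \in \C$ and $K$ strictly singular; consequently the spectrum $\sigma(S)$ is either a single point or a countable set accumulating only at a single point, and in particular $\sigma(S)$ always has an isolated point (if $\sigma(S)$ is a singleton that point is isolated in $\sigma(S)$; if it is infinite countable it has isolated points by a Baire-category argument on the countable compact set $\sigma(S)$).

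First I would assume, for contradiction, that $T$ is an $\kAP$-hypercyclic operator on an HI Banach space $X$. An $\kAP$-hypercyclic operator is in particular hypercyclic, so $X$ is infinite-dimensional and separable, and $X$ must be a complex Banach space since HI real spaces supporting hypercyclic operators do not arise in the relevant setting (alternatively, one passes to the complexification, which is again HI). Then $\sigma(T)$ is a nonempty compact subset of $\C$ of the structure just described, hence it has at least one isolated point. This directly contradicts Corollary \ref{espectro}, which says the spectrum of an $\kAP$-hypercyclic operator has no isolated points. This contradiction finishes the proof.

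The main obstacle — really the only nontrivial input — is invoking the Gowers--Maurey structural result in the exact form ``every operator on a complex HI space is a scalar plus a strictly singular operator, hence has spectrum with an isolated point.'' This is a known deep theorem and I would cite it (Gowers--Maurey, \emph{Invent. Math.} 1993, and Ferenczi's refinements) rather than reprove it; everything else is a one-line deduction from Corollary \ref{espectro}. A minor point to address cleanly is the real-versus-complex issue: if one wants to state the corollary for real HI spaces one notes that a hypercyclic operator on a real Banach space $X$ induces one on the complexification $\widetilde X$, which is HI when $X$ is, and $\kAP$-hypercyclicity is inherited, so the complex case suffices.

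\begin{proof}
Suppose, towards a contradiction, that $T$ is an $\kAP$-hypercyclic operator on a hereditarily indecomposable Banach space $X$; passing to the complexification if necessary, we may assume $X$ is complex. By the Gowers--Maurey theorem, every bounded operator on a complex hereditarily indecomposable Banach space is a scalar multiple of the identity plus a strictly singular operator; in particular $\sigma(T)$ is a compact set which is either a singleton or a countably infinite set. In either case $\sigma(T)$ has an isolated point (a singleton consists of one isolated point, and an infinite countable compact subset of $\C$ has isolated points, being a nonempty countable complete metric space and hence not perfect). This contradicts Corollary \ref{espectro}.
\end{proof}
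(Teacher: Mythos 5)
Your proof is correct and is exactly the argument the paper intends (the corollary is stated there without proof, as an immediate consequence of Corollary \ref{espectro}): by Gowers--Maurey every operator on a complex hereditarily indecomposable space is $\mu I+K$ with $K$ strictly singular, so $\sigma(T)$ is a nonempty countable compact set with at most one accumulation point and therefore has an isolated point, contradicting Corollary \ref{espectro}. One minor caveat: hypercyclicity (and hence $\kAP$-hypercyclicity) is not known to pass to complexifications in general, so in the real case it is cleaner to avoid that claim and instead note that $\sigma(T)$ is by definition the spectrum of the complexified operator, which still has the Gowers--Maurey form and hence an isolated point, while Corollary \ref{espectro} is applied to $T$ itself.
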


\section{Final comments and questions}
We would like to end this note with some questions related with the results discussed in the preceding paragraphs.  

 The proof of Theorem \ref{Caoticos} relies on the normabilty of the space. 
\begin{question}
Does Theorem \ref{Caoticos} hold on arbitrary Fr\'echet spaces?
\end{question}
In Theorem \ref{Without periodic sets} we showed the existence of an $\kAP$-hypercyclic operator that is not chaotic. By  Theorem \ref{Backward c_0} the operator does not have dense small periodic sets. In fact, we did not come to an operator that has dense small periodic points and does not have dense periodic points.
\begin{question}\label{pregunta small periodic sets implica caos}
Is any hypercyclic operator with dense small periodic sets necessarily chaotic. Or, more generally, does any operator with dense small periodic sets have dense periodic points?
\end{question}
We answered Question \ref{Pregunta1} for a wide class of operators and spaces. However the general question whether there exists a Furstenberg family $\A$ for which $\A$-hypercyclicity is equivalent to chaos remains open.


The following diagram shows the known implications between the concepts appearing in this article. A solid arrow means that the implication holds.
A dashed arrow means that the implication holds with some extra hypothesis (here in both cases weak$^*$-weak$^*$-continuity of the operator suffices). For the dotted line we don't know if the implication holds in general (Questions \ref{pregunta small periodic sets implica caos}) and all other implications are known to fail. 

\begin{center}
\begin{tikzpicture}[>=stealth,every node/.style={shape=rectangle,draw,rounded corners},scale=0.6]
    \node (F) {\begin{tabular}{l} Frequent \\ hypercyclicity \end{tabular}};
    \node (UF) [right=of F]{\begin{tabular}{l} Upper-frequent \\ hypercyclicity \end{tabular}};
    \node (R) [right =of UF]{\begin{tabular}{l} Reiterative \\ hypercyclicity \end{tabular}};
    
    \node (kAP) [below=2cm of R]{$\kAP$-hypercyclicity};
    \node(DS)[left =of kAP]{\begin{tabular}{l} Dense small periodic sets\\+ hypercyclicity \end{tabular}};
    \node (CHA) [left =of DS]{Chaos};

    \draw[->] (F) to[out=0,in=180] (UF);
     \draw[->] (UF) -- (R);
     \draw[->] (kAP) -- (R);
     \draw[->] (CHA) -- (DS);
    \draw[->] (DS) -- (kAP);

    \draw[->,dashed]  (kAP) to [out=225,in=315]  (DS);
     \draw[->,dashed]  (DS) to [out=225,in=315]   (CHA);
 \draw[->,dotted]  (DS) to [out=135,in=45] node[above,draw=none]  {?}  (CHA);



  
   
\end{tikzpicture}
\end{center}
\bibliography{biblio}

\bibliographystyle{abbrv}

\end{document}